\title{Lower bounds on the maximum number of non-crossing acyclic graphs}
\date{}
\author{Clemens Huemer\thanks{Departament de Matem\`atica Aplicada IV, Universitat Polit\`ecnica de Catalunya, Barcelona, Spain. E-mail address: \texttt{clemens.huemer@upc.edu}.}, Anna de Mier\thanks{Departament de Matem\`atica Aplicada II, Universitat Polit\`ecnica de Catalunya, Barcelona, Spain.  E-mail address: \texttt{anna.de.mier@upc.edu}. }}
\newtheorem{theorem}{Theorem}[section]
\newtheorem{proposition}[theorem]{Proposition}
\newtheorem{lemma}[theorem]{Lemma}
\newtheorem{corollary}[theorem]{Corollary}
\begin{document}

\maketitle
\begin{abstract}

This paper is a contribution to the problem of counting geometric graphs on point sets. More concretely, we look at the maximum numbers of non-crossing spanning trees and forests. We show that the so-called double chain point configuration of $N$ points has $\Omega(12.52^N)$ non-crossing spanning trees and $\Omega(13.61^N)$ non-crossing forests. This improves the previous lower bounds on the maximum number of non-crossing spanning trees and of non-crossing forests among all sets of $N$ points in general position given by Dumitrescu, Schulz, Sheffer and T\'oth. Our analysis relies on the tools of analytic combinatorics, which enable us to count certain families of forests on points in convex position, and to estimate their average number of components. A new upper bound of $O(22.12^N)$ for the number of non-crossing spanning trees of the double chain is also obtained.
\end{abstract}

\section{Introduction}

\bigskip

A geometric graph  on a point set $S$ (throughout, $S$ has no three collinear points) is a graph with vertex set $S$ and whose edges are straight-line segments with endpoints in $S$. A geometric graph is called {\emph{non-crossing}} (nc- for short) if no two edges intersect except at common endpoints. Counting nc-geometric graphs is a prominent problem in combinatorial geometry, since Ajtai et al.~\cite{ajtai} showed in 1982 that there exists a constant $c>0$ such that the number of nc-geometric graphs on sets $S$ of $N$ points is bounded from above by $O(c^N).$ 
Here we focus on non-crossing acyclic graphs, that is, spanning trees and forests. In~\cite{hoffmann} it is proved that no set $S$ of $N$ points has more than $O(141.07^N)$ nc-spanning trees. The maximum number of nc-spanning trees (among all sets of $N$ points) is very likely much smaller.

The point set with most nc-spanning trees known so far is the so-called {\emph{double chain}}. The double chain of $N=2n$ points consists of two sets of $n$ points each, one forming a convex chain and one forming a concave chain. We denote them by upper  and lower chain. Furthermore, each straight-line defined by two points from the upper chain leaves all the points from the lower chain on the same side, and reversely; see Figure~\ref{fig:lbtree}. We sometimes will refer to the left and to the right side of the double chain, where the point of the double chain with smallest abscissa is on the left and the point with largest abscissa is on the right.

Counting nc-geometric graphs on the double chain was initiated by Garc\'ia et al.~\cite{GNT} who proved that it has $\Theta^*(8^N)$ triangulations{\footnote{We use the $O^*$-, $\Theta^*$-, and $\Omega^*$-notation to describe the asymptotic growth of the number of geometric graphs as a function of the number $N$ of points, neglecting polynomial factors. If a class of nc-graphs has $\Theta^*(c^N)$ elements on $N$ points, we say that $c$ is the \emph{growth constant} of the class.}}, $\Omega(9.35^N)$ nc-spanning trees and $\Omega(4.64^N)$ nc-polygonizations, where the latter bound also is the current best lower bound on the maximum number of nc-polygonizations among all sets $S$ of $N$ points. The lower bound for the number of nc-spanning trees of the double chain was subsequently improved to $\Omega(10.42^N)$~\cite{D99} and to  $\Omega(12.0026^N)$~\cite{D11}. 
Our main contribution is an improvement of this bound to $\Omega(12.52^N)$, which  also improves our previous bound of $\Omega(12.31^N)$ presented in~\cite{eurocomb}. We also give a new lower bound of $\Omega(13.61^N)$ for the maximum number of nc-forests among all sets $S$ of $N$ points.  Along the way we also obtain the asymptotic growth for the number of nc-spanning trees of a point set similar to the double chain, the so-called {\emph{single chain}}. In this set the upper chain is replaced by a single point; see Figure~\ref{fig:singlechain}. The single chain was considered in~\cite{AOSS}, where the number of pseudo-triangulations was counted. We prove that it has $\Theta^*(9.5816\ldots^{n})$ nc-spanning trees.

The paper is organized as follows. In Section~\ref{sec:lbound} we describe a construction on the double chain that produces $\Omega(12.52^N)$ nc-spanning trees, and we include a variation for generating nc-forests. 
The key counting ingredient for obtaining our lower bounds is being able to enumerate certain classes of forests on points in convex position, and, more crucially, to estimate their average number of components. This is achieved through the methods of analytic combinatorics, with an spirit and techniques similar to those in~\cite{ncconfigsFN}, and it is the content of Section~\ref{sec:gfs}. 
A curious consequence of these calculations for points in convex position, is that the growth constant of nc-forests is one more than the growth constant of nc-forests that have no isolated vertex. In Section~\ref{sec:growth} we give 
an explanation of this fact and show that it also applies to other families of nc-graphs on points in convex position. We also prove that the growth constant of nc-forests of the double chain is at least one more than the growth constant of those forests without isolated vertices, and from this fact we prove our lower bound of $\Omega(13.61^N)$ nc-forests of the double chain. 
Finally, in Section~\ref{sec:upper} we provide a new upper bound of $O(22.12^N)$ for the number of nc-spanning trees of the double chain.


\section{Lower bounds}\label{sec:lbound}

In this section we give lower bounds for the numbers of spanning trees and forests of the double chain. For spanning trees, the result is the following.

\begin{theorem}\label{thm:lbound}
The double chain on $N$ points has $\Omega(12.52^N)$ non-crossing spanning trees.
\end{theorem}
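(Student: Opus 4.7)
The plan is to construct a rich family of non-crossing spanning trees of the double chain by gluing together two non-crossing forests, one drawn on the upper chain $U$ and one on the lower chain $L$, via a carefully chosen set of bridge edges between $U$ and $L$. Because $U$ and $L$ each lie in convex position, forests supported on them can be enumerated by the analytic combinatorics machinery developed in Section~\ref{sec:gfs}. Crucially, if on the upper chain we only use edges that stay in the upper half-plane determined by the two chains (and analogously below), then no such edge can cross any bridge, so the non-crossing constraint decouples into three independent conditions: non-crossing on $U$, non-crossing on $L$, and non-crossing among the bridges.

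First I would choose a convenient family $\mathcal{F}$ of non-crossing forests on $n$ convex points and let $f_{n,k}$ count those with $k$ components. Given $F_U\in\mathcal{F}$ with $k_U$ components and $F_L\in\mathcal{F}$ with $k_L$ components, I would count the number $B(k_U,k_L)$ of ways to insert bridges so that the union is a tree; because the component intervals along each chain are linearly ordered, this reduces to counting non-crossing bipartite spanning trees on a two-row ordered set, a well-known Catalan-type quantity. The resulting lower bound is
\begin{equation*}
\#\{\text{nc-spanning trees of the double chain}\}\;\geq\;\sum_{k_U,k_L}f_{n,k_U}\,f_{n,k_L}\,B(k_U,k_L).
\end{equation*}
Distinct triples $(F_U,F_L,\text{bridges})$ give distinct spanning trees, since $U$-edges, $L$-edges and bridges are recognisable in the final tree.

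To extract an exponential growth rate I would introduce the bivariate generating function $F(z,u)=\sum f_{n,k}\,z^n u^k$ for the family $\mathcal{F}$ and compute its dominant singularity as a function of $u$. Since $B(k_U,k_L)$ grows only singly-exponentially in $k_U+k_L$, the double sum above is a Hadamard-style product whose growth constant is obtained by optimising the base of $F(z,u)$ weighted by an appropriate power of the bridge generating function, over the auxiliary parameter $u>0$. Equivalently, one chooses a value of $u$ that tunes the \emph{average} number of components of a forest in $\mathcal{F}$ to the value that maximises the total count, which is exactly the quantity that Section~\ref{sec:gfs} is designed to estimate. This is a standard saddle-point calculation and produces a numerically explicit growth constant.

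The main obstacle is the choice of~$\mathcal{F}$: taking all non-crossing forests is natural but wasteful, whereas forbidding components with only one vertex, or further restricting the orientation of leaves on the chain, changes both $F(z,u)$ and the usable bridge endpoints, and a good choice requires balancing these effects. The improvement from the previous $\Omega(12.0026^N)$ in~\cite{D11} and $\Omega(12.31^N)$ in~\cite{eurocomb} up to $\Omega(12.52^N)$ is expected to come precisely from a sharper family and a sharper estimate of its component distribution. The final numerical step is to verify that the optimum of (forest rate)${}^2\times$(bridge rate) exceeds $12.52^{2n}=12.52^{N}$, which reduces to locating the singularity of an explicit algebraic generating function and solving a one-variable optimisation.
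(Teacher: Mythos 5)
Your high-level plan---forests on each chain plus non-crossing bridges, counted via the analytic combinatorics of Section~\ref{sec:gfs} and optimised over the component density---is the same general scheme as the paper, and your observation that the non-crossing condition decouples into three independent conditions on the double chain is correct and is indeed the structural feature that the construction exploits. However, as written the proposal has two genuine gaps, and without closing them there is no reason to expect to reach $12.52$.

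First, the quantity $B(k_U,k_L)$ is not well defined: a bridge attaches to a \emph{vertex}, not to a component, so the number of ways to bridge two fixed forests into a tree depends on the sizes and internal structure of the components, not only on $k_U$ and $k_L$. If you collapse each component to a super-node (equivalently, bridge only from some canonical vertex such as the leftmost one), the count becomes a clean bipartite non-crossing tree number, but you throw away exactly the degree of freedom that drives the improvement. The paper's key idea is to replace plain forests on the upper chain by \emph{marked} forests: a forest together with a choice of one vertex per component, the only one allowed to carry interior edges. This family has growth constant $\omega_M=9.58\ldots$ rather than $\omega_F=8.22\ldots$ (Proposition~\ref{prop:numberforests}(ii)), and since the mark doubles as the attachment point, the bridging step becomes clean again. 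Nothing in your proposal recovers this extra multiplicative freedom; your suggested alternatives (forbidding singleton components, restricting leaf orientations) do not obviously compensate for it.

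Second, even the marked-forest construction only yields $\Omega(12.31^N)$, the bound of~\cite{eurocomb}; reaching $12.52$ requires a further refinement that your proposal does not anticipate. The paper first selects $\ell=\beta n$ vertices of the upper chain to be isolated marked singletons, and on the remaining $n-\ell$ points takes a marked forest with \emph{no} isolated component (growth $\omega_U=8.58\ldots$, Proposition~\ref{prop:numberforests}(iii)); the binomial factor $\binom{n}{\beta n}$ then enters the optimisation on the same footing as the choice of the $k$ bridge endpoints. Related to this, the paper does not try to enumerate all bridge sets that turn the two forests into a tree: instead it specifies a deterministic assembly procedure (steps t1)--t4)) from a tuple (marked forest, $k$-subset of marks, forest on the lower chain, $k$-subset of lower vertices), and argues only that distinct tuples give distinct trees. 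This sidesteps the need for a clean formula for $B$ while retaining enough freedom for the lower bound. To turn your sketch into a proof you would need, at minimum, a precise definition of the bridge count together with a proof of injectivity, and some analogue of the marking device; otherwise the saddle-point optimisation, correct in outline, will land well short of the claimed constant.
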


We  next describe a family of trees that gives the desired bound. Our construction depends on some parameters that need to be maximized later.

For any spanning tree of the double chain on $N=2n$ points, the vertices on the upper and lower chains induce two forests $F_U$ and $F_L$ on a set of $n$ points in convex position; there are also some edges with one endpoint on each chain (the \emph{interior} edges). The first restriction is that we consider trees where only one vertex in each component of $F_U$ is incident to interior edges; this vertex will be called the \emph{mark} of the component and we call $F_U$ a \emph{marked forest}.

Of the several interior edges that are incident to a mark $v$, let $e_v$ be the rightmost one. The second restriction we impose is that  for each component $C$ of $F_L$ that is not incident to an edge $e_v$, there is a unique edge joining this component to a mark $m_C$ in $F_U$, and this edge has as endpoint in $F_L$ the leftmost vertex $v_C$ in $C$. Moreover, $m_C$ is as to the right as possible.

The set of edges $M_1=\{e_v: v \mbox{ is a mark}\}$ induces a forest. Observe that  this forest is uniquely determined by the leftmost edge in each component (assuming the set of marks is known). The set of these edges is denoted $M_2$; note that $M_2$ is a matching.

See Figure~\ref{fig:lbtree} for an example of a spanning tree of the double chain satisfying the conditions above.

\begin{figure}
\begin{center}
\includegraphics[height=4cm]{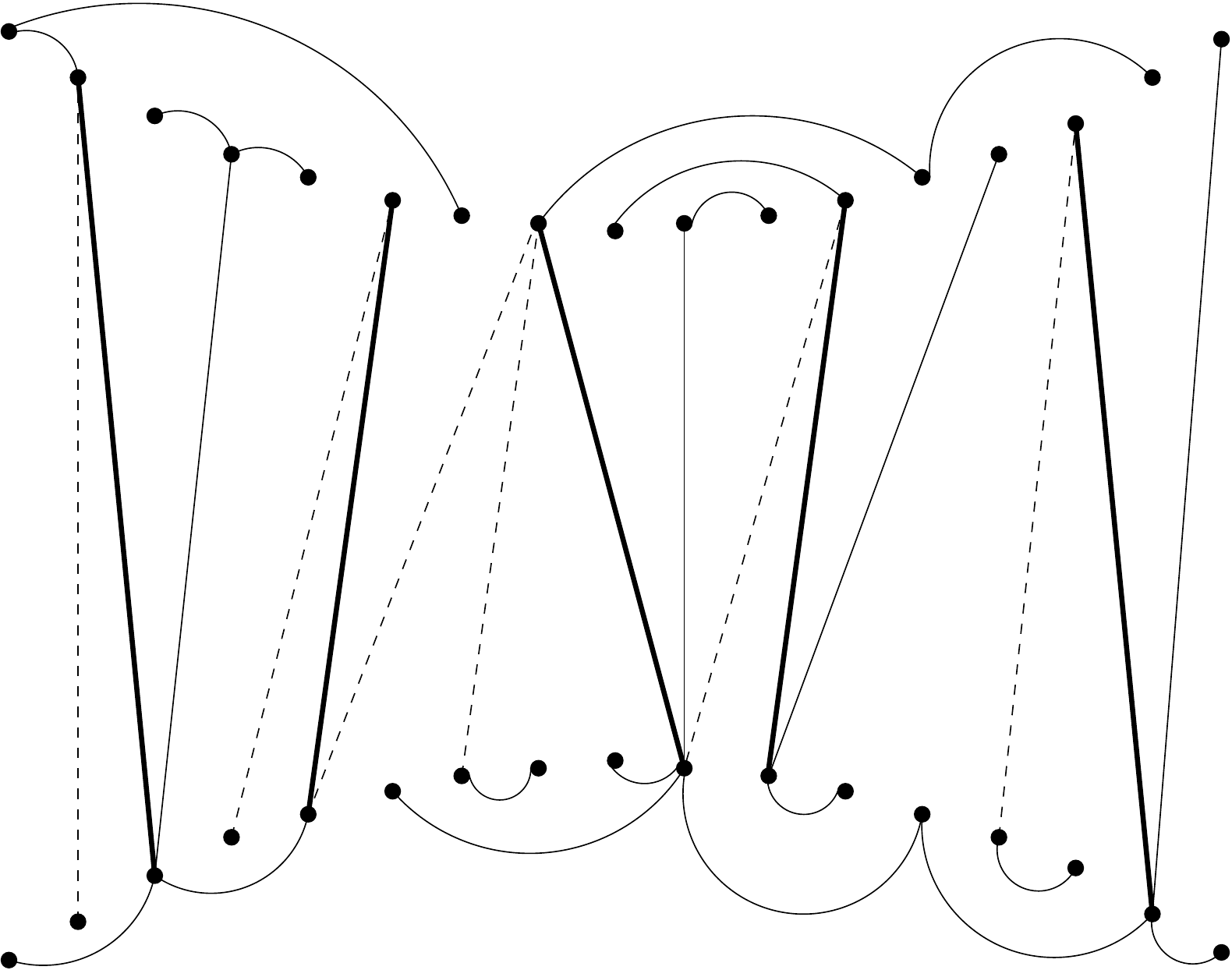}
\end{center}
\caption{A spanning tree of the double chain. The bold edges are the edges of $M_2$ and the thin continuous interior edges are the other edges of $M
_1$. The edges in $F_U$ and $F_L$ are drawn as arcs.
}
\label{fig:lbtree}
\end{figure}

We claim that the following data are enough to construct one such  tree with $|M_2|=k$:

\begin{itemize}
\item[a1)] A marked nc-forest $F_U$ with at least $k$ components on a set of $n$ points in convex position,
\item[a2)] a subset of $k$ of the marks in $F_U$,
\item[a3)] a nc-forest $F_L$ on a set of $n$ points in convex position, and
\item[a4)] a subset $M_L$ of $k$ vertices in $F_L$.
\end{itemize}

Indeed, it suffices to do the following:

\begin{itemize}
\item[t1)] Match the $k$ marks from a2) with the $k$ vertices in $F_L$ (corresponding to the edges of $M_2$);
\item[t2)] join the other marks in $F_U$ to the leftmost visible vertex in $M_L$ (corresponding to the edges of $M_1\backslash M_2$);
\item[t3)] for each component of $F_L$ that has no vertex in $M_L$, take its leftmost vertex and join it to the rightmost visible mark;
\item[t4)] if the result is not connected, for each component $C$ let $l_C$ be its leftmost mark. Label the connected components as $C_1,\ldots,C_r$ in such a way that the marks $l_{C_1},\ldots, l_{C_r}$ are ordered from left to right. Now, for $i\geq 2$, join $l_{C_i}$ to the rightmost visible vertex of $M_L$ in $C_{i-1}$.
\end{itemize}

Note that in this way we do not generate all the spanning trees satisfying the restrictions imposed above, but that all the trees that are generated are different, which is enough for our purposes.

We next count in how many ways we can choose the forests and subsets in items a1)--a4), which essentially amounts to enumerating forests in points in convex position. As stated in the proposition below, the number of nc-forests on $n$ points in convex position where each component has a mark is $\Theta^*(9.5816\ldots^{n})$, and for $n$ sufficiently large, at least $40\%$ of them have  $0.2237n$ or more components each. However, there is another way of choosing the forest $F_U$ from a1) that gives us more choice. First we select $\ell$ vertices out of the $n$ in the upper chain and we mark all of them, and then on the remaining $n-\ell$ points, we choose a marked forest $F'_U$ such that all components in this forest have at least two vertices.

Of the estimates below, (i) is well-known~(\cite{ncconfigsFN}) and the other two will be proved in Section~\ref{sec:gfs}.

\begin{proposition}\label{prop:numberforests}
\begin{itemize}
\item[(i)] The number of non-crossing forests on $n$ points in convex position is $bn^{-3/2}\omega_F^n(1+O(1/n))$, where $\omega_F=8.2246\ldots$ and $b$ is a constant. 
\item[(ii)] The number of marked non-crossing forests on $n$ points in convex position is $cn^{-3/2} \omega_M^n(1+O(n^{-1/2}))$, where $\omega_M=9.5816\ldots$ and $c$ is a constant.  Moreover, for $n$ large enough at least $40\%$ of these forests have $0.2237n$ or more components.
\item[(iii)] The number of marked non-crossing forests on $n$ points in convex position such that no component is an isolated vertex is $dn^{-3/2}\omega
_U^n(1+O(n^{-1/2}))$, where $\omega_U=8.5816\ldots$ and $d$ is a constant. Moreover, for $n$ large enough at least $40\%$ of these forests have $0.1332n$ or more components.
\end{itemize}
\end{proposition}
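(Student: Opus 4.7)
Our plan is to adapt the analytic combinatorics framework of \cite{ncconfigsFN}, working with bivariate generating functions in which an auxiliary variable $u$ marks the number of components. Part (i) is essentially proved in that reference, so we concentrate on (ii) and (iii). The strategy is identical for both statements: derive an algebraic equation for the bivariate GF from the root-vertex decomposition of a non-crossing forest, apply transfer theorems to the univariate specialization $u=1$ to obtain the growth constants, and apply the quasi-powers theorem to the perturbation in $u$ to control the distribution of the number of components.

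We would first recall from \cite{ncconfigsFN} that the GF $T(z)$ for non-crossing trees on a convex $n$-gon satisfies an explicit algebraic equation, and that the GF $F(z)$ for non-crossing forests can be written as an algebraic function of $T$ and $z$ obtained by decomposing a forest according to the component containing a fixed boundary vertex and the regions that this component cuts off on the polygon. Marking one vertex per component corresponds, at the level of generating functions, to replacing each tree-contribution by $\tilde T(z):=zT'(z)$, the GF for non-crossing trees with a distinguished vertex; inserting a factor $u$ per component yields an algebraic bivariate series $M(z,u)$ whose specialization $M(z,1)$ enumerates marked non-crossing forests. The GF $U(z,u)$ relevant to (iii) is obtained in the same way with $\tilde T(z)-z$ in place of $\tilde T(z)$, since $\tilde T(z)-z=2z^2+9z^3+\dots$ excises the (unique) marked tree of size one.

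Next, we carry out the singularity analysis. The polynomial system defining $M(z,u)$ has a square-root type dominant singularity $\rho_M(u)$ arising as the confluence of two branches of the algebraic curve; an application of the implicit function theorem to the critical subsystem shows that $\rho_M$ is analytic in $u$ at $u=1$, and an analogous argument handles $\rho_U(u)$. Standard transfer theorems then deliver
$$[z^n]\,M(z,1)=c\,n^{-3/2}\,\omega_M^n\bigl(1+O(n^{-1/2})\bigr),\qquad [z^n]\,U(z,1)=d\,n^{-3/2}\,\omega_U^n\bigl(1+O(n^{-1/2})\bigr),$$
with $\omega_M=1/\rho_M(1)$ and $\omega_U=1/\rho_U(1)$. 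A numerical solution of the defining polynomial system gives $\omega_M\approx 9.5816$ and $\omega_U\approx 8.5816$, matching the stated constants.

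Finally, for the distribution of components we invoke the quasi-powers theorem applied to $M(z,u)$, which yields that the number of components in a uniformly random marked forest on $n$ points is asymptotically Gaussian with mean $\mu_M n+O(1)$ and variance of order $n$, where $\mu_M=-\rho_M'(1)/\rho_M(1)$ is computed by implicit differentiation of the critical system at $u=1$. Numerical evaluation gives $\mu_M>0.2237$, and Chebyshev's inequality then forces the fraction of marked forests with at least $0.2237\,n$ components to tend to $1$, in particular exceeding $40\%$ for $n$ large; the argument for (iii), with $\mu_U>0.1332$, is identical. The main obstacle, we expect, is manipulating the Flajolet--Noy decomposition into polynomial equations compact enough to extract $\rho(1)$, $\rho'(1)$, and $\mu$ numerically and to verify analyticity of the singularity functions at $u=1$; once these equations are in hand, the asymptotic extraction is routine.
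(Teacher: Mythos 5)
Your proposal follows essentially the same route as the paper: the root-vertex decomposition $F_T(z)=1+T(zF_T(z))$ with $T$ replaced by $zT'(z)$ (resp.\ $zT'(z)-z$) for marked forests (resp.\ marked forests without isolated vertices), singularity analysis of the resulting algebraic equations for the growth constants, and a bivariate perturbation with a Gaussian limit law for the component count. The only cosmetic difference is that the paper deduces the 40\% bound directly from the central limit theorem (roughly half the mass lies above the mean $\kappa n$), whereas you argue via Chebyshev with a strict inequality on the mean; both are valid.
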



With this knowledge, setting $k=\alpha n$ and $\ell=\beta n$ (for $\alpha,\beta \in (0,1)$ to be determined), and ignoring subexponential terms, we get the following lower bound on the number of nc-spanning trees of the double chain:
$$b(\alpha,\beta)=\binom{n}{\beta n}8.5816^{n-\beta n}\binom{0.1332(n-\beta n)+\beta n}{\alpha n} 8.2246^{n}\binom{n}{\alpha n}.$$

Note that when choosing the $k=\alpha n$ marks from the upper forest $F_U$ we are actually not using all the available marks, but only the ones that come from isolated vertices and the first $0.1332(n-\beta n)$ marks of the marked forest $F'_U$.

Using the binary entropy function $H(x)=-x \log_2(x) - (1-x)\log_2(1-x)$ and Stirling's formula, we can estimate the binomial coefficient ${{\epsilon n}\choose{\delta n}}$ as $2^{\epsilon H(\frac{\delta}{\epsilon})n},$ ignoring again subexponential terms.

It thus is enough to maximize
\begin{align*}
e(\alpha,\beta)=& H(\beta)+(1-\beta)\log_2(8.5816)+(0.1332(1-\beta)+\beta)H(\frac{\alpha}{0.1332(1-\beta)+\beta})+\\ &\log_2(8.2246)+H(\alpha).
\end{align*}

The values
$\alpha=0.267, \beta=0.267$ give
$\Omega^*(2^{(7.293063\ldots /2)N})=\Omega^*(12.5232\ldots^N)$ nc-spanning trees on the double chain on $N$ vertices, thus proving Theorem~\ref{thm:lbound}.\\

We remark that the bound of $\Theta^*(9.5816\ldots^{n})$ marked forests on $n$ points in convex position
gives the number of nc-spanning trees of the single chain. This point set has triangular convex hull and all but one point $p$ of the set are in convex position; see Figure~\ref{fig:singlechain}. To count the number of nc-spanning trees of this set, notice that the deletion of $p$ gives a marked nc-forest on the lower chain (the mark of each component being the vertex that was adjacent to $p$).

\begin{figure}[t]
	\centering
		\includegraphics{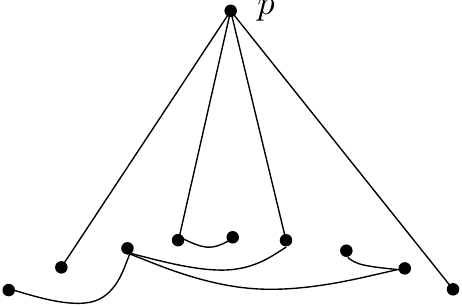}
	\caption{A spanning tree of the single chain, obtained from a marked forest on the lower chain. }
	\label{fig:singlechain}
\end{figure}


\begin{corollary}
The single chain on $N$ points has  $\Theta^*(9.5816\ldots^N)$ non-crossing spanning trees.
\end{corollary}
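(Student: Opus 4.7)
The plan is to establish a bijection between non-crossing spanning trees of the single chain and marked non-crossing forests on the lower chain, and then invoke Proposition~\ref{prop:numberforests}(ii). The corollary gives a $\Theta^*$ estimate, so both directions of the bijection will be needed.

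First I would fix notation: let $p$ be the apex of the triangular convex hull, and let the other $n = N-1$ points lie on the lower convex chain. The crucial geometric observation is that $p$ sees every vertex of the lower chain, and moreover two segments $pu$ and $pv$ never cross each other; additionally, since the edges drawn among the lower-chain vertices can be realized as arcs strictly below the chord $pq$ for $p$ (as in Figure~\ref{fig:singlechain}), they do not interfere with any edge incident to $p$.

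Given a non-crossing spanning tree $T$, removing $p$ leaves a non-crossing forest $F$ on the $n$ convex-position points. Each component of $F$ must be joined to $p$ by at least one edge of $T$ (by connectedness) and at most one edge of $T$ (otherwise together with a path in the component this would produce a cycle). So each component has a well-defined mark, namely its unique neighbor of $p$ in $T$, and $F$ is a marked non-crossing forest. Conversely, starting from any marked non-crossing forest $F'$ on the $n$ lower-chain points, adding $p$ and joining it to the mark of each component produces a spanning tree; by the geometric observation above, no crossings are introduced, and connectedness is immediate. The two maps are mutual inverses.

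Applying Proposition~\ref{prop:numberforests}(ii), the number of nc-spanning trees of the single chain on $N$ points equals the number of marked nc-forests on $n=N-1$ points in convex position, which is $\Theta^*(9.5816\ldots^{N-1}) = \Theta^*(9.5816\ldots^{N})$ since the constant factor is absorbed by the $\Theta^*$-notation. I do not foresee a serious obstacle: the only point requiring care is confirming that the bijection respects the non-crossing condition in both directions, which reduces to the elementary observation that an apex vertex of a triangular convex hull can be connected to any subset of the opposite-side points without introducing crossings.
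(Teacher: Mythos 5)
Your argument is correct and coincides with the paper's own (brief) justification: deleting the apex $p$ from a non-crossing spanning tree leaves a marked non-crossing forest on the lower chain, with each mark being the component's unique neighbor of $p$, and the converse construction introduces no crossings since the lower chain bulges toward $p$ (so any chord $q_iq_j$ has all intermediate chain points and $p$ on the same side). You spell out both directions of the bijection, which the paper leaves implicit; the geometric justification is slightly loosely phrased (the edges are straight segments, not arcs, but the no-crossing claim holds because every lower-chain vertex lies strictly on the $p$-side of every chord of the lower chain).
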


\bigskip

Using a similar construction, we next prove a lower bound of $\Omega(13.40^N)$ for the number of nc-forests of the double chain. This is not our main result on the number of nc-forests, as in Section~\ref{sec:growth} we further improve this bound to $\Omega(13.61^N)$. We nevertheless present  it here  to show how far the ideas in this section can be pushed in the case of forests, and because the improvement in Section~\ref{sec:growth} needs as a first step the number  of nc-forests of a particular kind, which will be bounded using a variation of the following construction.

\begin{theorem}\label{thm:lboundforest}
The double chain on $N$ points has $\Omega(13.40^N)$ non-crossing forests.
\end{theorem}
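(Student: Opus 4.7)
My approach is to adapt the construction used for spanning trees in Theorem~\ref{thm:lbound}, exploiting the fact that for a forest connectivity is not required. The construction follows steps a1)--a4) and t1)--t3) from the tree case, but with two sources of additional freedom: step t4 is removed entirely, and steps t2 and t3 become optional on a per-mark and per-component basis. Concretely, for each mark of $F_U$ not selected in a2) I would attach a bit recording whether it is joined to $M_L$ as in t2) or left with no interior edge; and for each component of $F_L$ that contains no vertex of $M_L$ a bit records whether its leftmost vertex is attached to the nearest visible mark as in t3) or left disconnected. To avoid double counting, I would fix a canonical rule identifying the mark of each component of $F_U$ that turns out to carry no interior edge (for instance, declaring it to be the leftmost vertex of the component), so that the data a1)--a4) together with the two families of bits can be recovered unambiguously from the resulting forest, and distinct choices produce distinct forests.

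Keeping the same parameters as in the tree construction ($k=\alpha n$, $\ell=\beta n$) and using Proposition~\ref{prop:numberforests}, the number of nc-forests produced becomes, up to subexponential factors,
\begin{align*}
\binom{n}{\beta n}\, 8.5816^{(1-\beta)n}\, &\binom{0.1332(1-\beta)n+\beta n}{\alpha n}\, 8.2246^{n}\, \binom{n}{\alpha n}\\
&\quad \times\ 2^{g(\alpha,\beta)n}\, 2^{h(\alpha)n},
\end{align*}
where $g(\alpha,\beta)n$ is the number of marks in $F_U$ not selected for $M_2$ and $h(\alpha)n$ is the expected number of components of $F_L$ containing no vertex of $M_L$; both grow linearly in $n$ by Proposition~\ref{prop:numberforests} together with standard concentration estimates for uniformly random $\alpha n$-subsets.

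After taking logarithms and using the entropy approximation $\binom{\epsilon n}{\delta n}\approx 2^{\epsilon H(\delta/\epsilon)n}$, one obtains a function analogous to the $e(\alpha,\beta)$ of Theorem~\ref{thm:lbound} but with two additional linear-in-$n$ terms contributed by the bits. A numerical optimization over $\alpha$ and $\beta$ should then produce the claimed $\Omega(13.40^N)$ bound. The main obstacle I expect is the bookkeeping required to verify that distinct data really do produce distinct forests, so that the multiplicative bit factors genuinely contribute to the count; once this is pinned down via the canonical mark convention above, the remaining work is a direct adaptation of the final computation in the proof of Theorem~\ref{thm:lbound}.
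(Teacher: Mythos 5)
Your approach captures the right intuition — forests give extra freedom because connectivity is not required — but the way you implement the extra freedom on the upper chain has a counting gap that invalidates the argument.

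The problem is with the bit factor $2^{g(\alpha,\beta)n}$ attached to non-selected marks of $F_U$. You correctly observe that if a mark is assigned bit $0$ (no interior edge), then its position inside its component is immaterial to the resulting forest, and you propose a ``canonical mark'' rule (leftmost vertex) to make recovery unambiguous. But that rule means that \emph{only} the tuples in which every bit-$0$ mark sits at the canonical position are valid — i.e.\ lead to distinct forests. Your formula, however, multiplies the number of \emph{all} marked forests (growth $8.5816\ldots$) by $2^{g(\alpha,\beta)n}$, thereby counting tuples whose bit-$0$ marks are at non-canonical positions as well; those are precisely the tuples you must discard. This is an exponential overcount: for a component of size $s$ with a bit-$0$ mark, you count $s$ choices when only one is valid. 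Since what you need is a \emph{lower} bound, counting a superset of valid tuples proves nothing. The clean resolution, which is what the paper does, is not to take a fully marked forest and then attach bits, but to directly enumerate nc-forests in which \emph{some} non-isolated components carry a mark (so the ``unmarked'' components take the place of your bit-$0$ components); this class has growth constant $\omega_S=9.8643\ldots$ (Proposition~\ref{prop:forestforest}(ii)), which is strictly less than your implicit $8.5816\cdot 2^{\text{const}}$ count, as it must be.

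The paper's construction also diverges on the lower chain: instead of allowing $F_L$ to be an arbitrary nc-forest and putting bits on its $M_L$-free components, it takes $F_L$ without isolated vertices (growth $\omega_L=7.2246\ldots$), chooses $M_L$ among non-isolated vertices, and handles the $\gamma n$ isolated vertices separately with a $\binom{n}{\gamma n}2^{\gamma n}$ factor (the subset of isolated vertices that are attached to a mark). This decoupling is important because attaching an isolated vertex only to a mark on the upper chain keeps the decoding of $M_2$, $M_L$ and the two chain forests unambiguous, whereas in your scheme an isolated vertex of $F_L$ that lands in $M_L$ or gets joined via step t2) may not be recoverable. Once these issues are repaired, one essentially arrives at the three-parameter optimization $f(\alpha,\beta,\gamma)$ in the paper, whose optimum at $\alpha=0.235$, $\beta=0.245$, $\gamma=0.166$ yields $\Omega^*(13.40^N)$.
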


The restriction of a nc-forest of the double chain to each of the chains gives two forests $F_U$ and $F_L$. Again, we consider only forests where in each component of $F_U$ there is at most one vertex incident with interior edges. Thus, $F_U$ is a forest where some components have a mark and some others do not. 
As before, let $e_v$ be the rightmost interior edge incident with a mark $v$, let $M_1=\{e_v:v \mbox{ is a mark}\}$, and let $M_2$ be the set consisting of the leftmost edge in each component of $M_1$. We impose the further restriction that no edge $e_v$ is incident with an isolated vertex of $F_L$.

A forest satisfying these conditions can be constructed from the following data.

\begin{itemize}
\item[b1)] A  nc-forest $F_U$ where at least $k$ of its components have a mark,
on a set of $n$ points in convex position,
\item[b2)] a subset of $k$ of the marks in $F_U$,
\item[b3)] a nc-forest $F_L$ on a set of $n$ points in convex position, 
\item[b4)] a subset $M_L$ of $k$ vertices in $F_L$, none of which is isolated, and
\item[b5)] a subset of $m$ vertices of $F_L$ such that all of them are isolated.
\end{itemize}

Indeed, it suffices to do the following:

\begin{itemize}
\item[f1)] Match the $k$ marks from b2) with the $k$ vertices in $F_L$ (corresponding to the edges of $M_2$);
\item[f2)] join the other marks in $F_U$ to the leftmost visible vertex in $M_L$ (corresponding to the edges of $M_1\backslash M_2$);
\item[f3)] join each vertex from the subset given in b5) with the rightmost visible mark of $F_U$.
\end{itemize}

We use the following estimates, also proved in Section~\ref{sec:gfs}.

\begin{proposition}\label{prop:forestforest}
\begin{itemize}
\item[(i)] The number of nc-forests with no isolated vertex on a set of $n$ points in convex position is $b n^{-3/2} \omega_L^n(1+O(n^{-1/2}))$, where $\omega_L=7.2246\ldots$ and $b$ is a constant.
\item[(ii)] On a set of $n$ points in convex position, the number of nc-forests where some of the connected components that are not isolated vertices bear a mark is $c n^{-3/2} \omega_S^n(1+O(n^{-1/2}))$, where $\omega_S=9.8643\ldots$ and $c$ is a constant. Moreover, at least $40\%$ of these forests have $0.1106n$ or more connected components that bear marks.
\end{itemize}
\end{proposition}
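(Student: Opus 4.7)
The plan is to apply the analytic-combinatorics pipeline of \cite{ncconfigsFN}: derive an algebraic or functional equation for each class' ordinary generating function (OGF), locate its dominant (square-root-type) singularity, and read off the asymptotic growth via the Flajolet--Odlyzko transfer theorem. For the concentration claim in (ii), I would pass to a bivariate OGF and apply the Quasi-Powers / Gaussian-limit machinery of Flajolet and Sedgewick.

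For part (i), let $F(z)$ and $F^\circ(z)$ denote the OGFs of all nc-forests and of nc-forests with no isolated vertex on $n$ convex points, respectively. Classifying a nc-forest by its (arbitrary) subset of isolated vertices yields $F_n = \sum_{k} \binom{n}{k} F^\circ_{n-k}$, and this binomial convolution translates into the OGF identity
\[
F(z) = \frac{1}{1-z}\,F^\circ\!\left(\frac{z}{1-z}\right), \qquad F^\circ(w) = \frac{1}{1+w}\,F\!\left(\frac{w}{1+w}\right).
\]
By Proposition~\ref{prop:numberforests}(i), $F$ has a square-root singularity at $z = 1/\omega_F$; since the map $w \mapsto w/(1+w)$ is conformal at $w_0 := (1/\omega_F)/(1-1/\omega_F) = 1/(\omega_F-1)$, the singularity of $F^\circ$ at $w_0$ is again of square-root type. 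Hence $\omega_L = \omega_F - 1 = 7.2246\ldots$ and the transfer theorem produces the claimed $bn^{-3/2}\omega_L^n(1+O(n^{-1/2}))$ asymptotic.

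For part (ii), the same insert-isolated-vertices bijection reduces the count to that of nc-forests with no isolated vertex in which every non-isolated component optionally bears one marked vertex. Writing $T(z)$ for the OGF of non-crossing trees on $n \ge 2$ convex points and $T^\bullet(z) = z\,T'(z)$ for its pointed variant, the per-component weight is $T(z) + T^\bullet(z)$; plugging this into the Flajolet--Noy root-edge decomposition for nc-forests on convex points yields an algebraic equation for the intermediate OGF $S^\circ(z)$, whose dominant singularity transferred through $S(z) = (1-z)^{-1}\,S^\circ(z/(1-z))$ gives $\omega_S = 9.8643\ldots$ and the stated asymptotic. For the concentration statement, introduce a variable $u$ marking each marked component by using per-component weight $T(z) + u\,T^\bullet(z)$; the resulting bivariate $S(z,u)$ is algebraic, and a standard perturbation check at $u = 1$ verifies the variability condition of Flajolet--Sedgewick. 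This yields a Gaussian limit law for the number of marked components, with mean $\mu n$ and variance $\Theta(n)$ for an explicit $\mu > 0.1106$, from which Chebyshev's inequality gives that the fraction of forests with fewer than $0.1106 n$ marked components tends to $0$ and is in particular below $60\%$ for $n$ large.

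The main obstacle is the explicit setup and solution of the algebraic equation for $S^\circ(z,u)$: one must fix a decomposition of nc-forests on convex points that cleanly handles per-component marking, verify that $u = 1$ lies in the regime where the singularity moves analytically with $u$ (so that the bivariate transfer applies), and extract $\omega_S$ and $\mu$ by numerical solution. Once the algebraic equation is in hand, all subsequent steps mirror those used to prove Proposition~\ref{prop:numberforests}.
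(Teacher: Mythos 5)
Your proposal is correct in outline but takes a genuinely different route from the paper. The paper proves both parts by plugging the appropriate class of component trees into the composition $F_T(z)=1+T(zF_T(z))$ and deriving a fresh algebraic equation for each OGF: for (i) it uses $T_1(z)=T_0(z)-z$ (trees with at least two vertices) to get an explicit quartic for $F_{T_1}$, and for (ii) it uses $T_0\cup T_1^*$ directly, together with the bivariate version $F_{T_0\cup T_1^*}(z,w)=1+T_0(zF)+wT_1^*(zF)$ for the number of marked components. You instead factor out the isolated vertices via the binomial-transform identity $F(z)=(1-z)^{-1}F^{\circ}(z/(1-z))$, which lets you read off $\omega_L=\omega_F-1$ for free from Proposition~\ref{prop:numberforests}(i) and similarly relates $\omega_S$ to the growth constant of forests with no isolated vertex whose components may or may not carry a mark. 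This insight is precisely the content of Proposition~\ref{prop:growth} in Section~\ref{sec:growth} (and it is shown there to be a general phenomenon), so your shortcut for (i) is sound and arguably more economical than the paper's direct derivation. For (ii), though, be aware of two points where the work you are sidestepping has to reappear. First, you still need an explicit algebraic equation for $S^{\circ}$ (and you cannot get its discriminant-and-singularity analysis ``for free'' from known results), so the computational effort is comparable to the paper's direct derivation of the equation for $F_{T_0\cup T_1^*}$. Second, for the concentration constant you must push the bivariate singularity $\rho^{\circ}(u)$ through the conformal substitution $z\mapsto z/(1-z)$; the resulting formula $\mu=\tilde{\mu}\,\omega_{S^{\circ}}/\omega_S$ is \emph{not} the one appearing in Proposition~\ref{prop:mu} (which accounts for total components, including the added isolated vertices, hence the extra $+i$ term), so you need a modified statement without that term, or a direct check that $\rho(u)=\rho^{\circ}(u)/(1+\rho^{\circ}(u))$ gives $-\rho'(1)/\rho(1)=\tilde{\mu}/(1+\rho^{\circ}(1))$. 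Both versions yield $0.1106\ldots$, and your Chebyshev/Gaussian conclusion is fine as long as the computed mean strictly exceeds $0.1106$, which it does; the paper states the weaker ``at least $40\%$'' because it places the threshold essentially at the mean.
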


As we did for trees, to choose the forest $F_U$ we first pick $\ell$ vertices that will be isolated and will all bear a mark, and then on the remaining $n-\ell$ vertices we  take a forest  where some of the components that are not isolated are marked. Let $k=\alpha n$, $\ell=\beta n$ and $m=\gamma n$.
Using the estimates from Proposition~\ref{prop:forestforest}, the number of nc-forests of the double chain on $2n$ points is at least
$$f(\alpha,\beta,\gamma)=\binom{n}{\beta n} 9.8643^{n-\beta n} \binom{0.1106(n-\beta n)+\beta n}{\alpha n} \binom{n}{\gamma n}\binom{n-\gamma n}{\alpha n} 7.2246^{n-\gamma n}2^{\gamma n}.$$

We estimate this quantity as before and the values $\alpha=0.235$, $\beta=0.245$ and $\gamma=0.166$ give $\Omega^*(13.4025\ldots^N)$ nc-forests on the double chain on $N$ points, as needed.


\bigskip

Before moving to the proofs of Propositions~\ref{prop:numberforests} and~\ref{prop:forestforest}, observe that, in the notation of those propositions, one has
$$ \omega_F=\omega_L+1, \qquad \omega_M=\omega_U+1.$$
That is, for points in convex position, the growth constant for nc-forests is one more than the growth constant for nc-forests without isolated vertices, and similarly for marked nc-forests.

We are not aware that such a relationship has been observed before, and we give a combinatorial proof of it in Section~\ref{sec:growth}. Thus, if we are only interested in growth rates and not in the exact asymptotic behaviour, one need not carry the calculations explained in Section~\ref{sec:gfs} for the classes that have no isolated vertices. 

Furthermore, we prove in Section~\ref{sec:growth} a related inequality between the growth constants of nc-forests in the double chain, which allows us to improve the bound in Theorem~\ref{thm:lboundforest}.

\section{Non-crossing forests of points in convex position}\label{sec:gfs}

In this section we use generating functions and the techniques of analytic combinatorics to prove Propositions~\ref{prop:numberforests} and~\ref{prop:forestforest}.

Consider a set of $n$ points in convex position, labelled counterclockwise $p_1,\ldots,p_n$; the vertex $p_1$ is called the root vertex.   A systematic study of several classes of non-crossing graphs with generating functions was undertaken by Flajolet and Noy in~\cite{ncconfigsFN}; the results in this section are an extension of theirs using similar techniques (see also the book~\cite{ancombFS} by Flajolet and Sedgewick).

Let $\mathcal{T}$ be a set of nc-trees and let $\mathcal{F_T}$ be the set of those nc-forests such that its connected components belong to $\mathcal{T}$ (by taking as the root of each component the vertex with smallest label and relabelling the other vertices suitably). Let $T(z)$ and $F_T(z)$ be the corresponding generating functions, that is,
$$T(z)= \sum_{n\geq 1} t_nz^n,\qquad
F_T(z)=\sum_{n\geq 0} f_nz^n,$$
where $t_n$ and $f_n$ denote the number of $n$-vertex graphs in $\mathcal{T}$ and $\mathcal{F_T}$, respectively. For technical convenience, we set $f_0=1$ (but $t_0=0$).

We have the following key relation
\begin{equation}\label{eq:tf}
F_T(z)=1+T(zF_T(z)).
\end{equation}
The combinatorial explanation is as follows (see Figure~\ref{fig:tf}). Given a forest in~$\mathcal{F_T}$, let $t_1$ be the connected component that contains the root vertex; this component is of course a tree of~$\mathcal{T}$. Now, the vertices (if any) that lie strictly between any two consecutive vertices of $t_1$ induce a nc-forest, which belongs to $\mathcal{F}_T$. The substitution $zF_T(z)$ in $T(z)$ reflects this (where the term $z$ corresponds to a vertex of $t_1$). Thus, if an  equation for $T(z)$ is known, we immediately get from~(\ref{eq:tf}) an equation for $F_T(z)$. We now follow this scheme and find equations for the generating functions for the different classes of forests under consideration.

\begin{figure}
\begin{center}
\includegraphics[height=3cm]{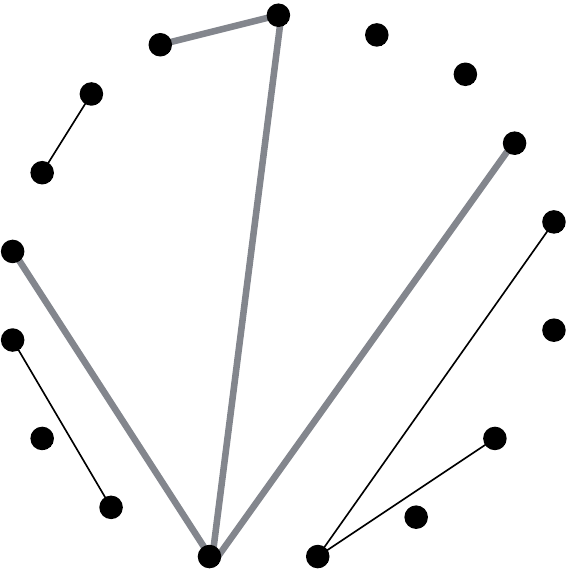}
\end{center}
\caption{A nc-forest decomposes as a nc-tree (in grey) with a (possibly empty) nc-forest between any two of its consecutive vertices.}
\label{fig:tf}
\end{figure}

Let ${\mathcal{T}}_0$ be the set of all nc-trees. It is well-known that the generating function $T_0(z)$  satisfies
\begin{equation}\label{eq:trees}
T_0(z)^3-zT_0(z)+z^2=0.
\end{equation}
Let now $\mathcal{T}^*$ be the class of marked trees, that is, trees with one vertex distinguished; as a tree with $n$ vertices gives rise to $n$ marked trees, we have $T^*(z)=zT_0'(z)$.  By differentiating equation~(\ref{eq:trees}) and eliminating $T_0(z)$ we get an equation for $T_0'(z)$, namely,
\begin{equation} \label{eq:tp}
(27z^2-4z)T_0'(z)^3+(1-6z)T'_0(z)-1+8z=0.
\end{equation}
From this and~(\ref{eq:tf}) we obtain an equation for the generating function for marked forests, $Y=F_{T^*}(z)$:
\begin{equation*}
27zY^4+(8z^3-6z^2-81z-4)Y^3+(5z^2+82z+12)Y^2-(28z+12)Y+4=0.
\end{equation*}
Now let $\mathcal{T}_1$ be the class of nc-trees with more than one vertex; clearly $T_1(z)=T_0(z)-z$. From equations~(\ref{eq:trees}) and~(\ref{eq:tf}) it follows that $Y= F_{T_1}(z)$ satisfies
\begin{equation*}
(1+z)^3 Y^3-(3z^2+7z+3)Y^2+(4z+3)Y-1=0.
\end{equation*}
As for the class $\mathcal{T}^*_1$ of marked trees with at least two vertices, the corresponding generating function is $T_1^*(z) =zT'(z)-z$. 
From~(\ref{eq:tp}) and~(\ref{eq:tf}) we obtain the following equation for $Y=F_{T_1^*}(z)$:
\begin{equation}
27z(1+z)^3Y^4-(83z^3+180z^2+93z+4)Y^3+
(99z^2+106z+12)Y^2-(12+40z)Y+4=0.\label{eq:f1s}
\end{equation}

The last class of forests we need is the one where 
some of the components that are not isolated vertices have a mark. This class is $\mathcal{F}_{\mathcal{T}_0\cup \mathcal{T}_1^*}$. We obtain the corresponding equation for its generating function as above:
\begin{equation*}
27z(1+z)^3Y^4-(20z^3+234z^2+93z+4)Y^3+6(3z+3)(8z+1)Y^2-12(4z+1)Y+4=0.
\end{equation*}

Once an algebraic equation for $F_T(z)$ is known, it is usually routine to obtain an asymptotic estimate of the coefficients of $F_T(z)$.
 The method we apply is the one described in~\cite[Section 4]{ncconfigsFN} or, more generally,  in Sections VI and VII of ~\cite{ancombFS}.  The main idea is that the singularity of $F_T(z)$ with smallest modulus (the \emph{dominant} singularity) and the behaviour around this singularity determine the asymptotic behaviour of the coefficients of $F_T(z)$. More concretely, we summarize in the following theorem the results from~\cite{ancombFS} that are sufficient in our setting (see Lemma~VII.3 and Theorems VI.1 and VI.3 for more details). The subindices indicate derivatives with respect to that variable.

\begin{theorem}\label{thm:asymptotics}
Let $Y(z)$ be defined by the implicit equation $G(z,Y(z))=0$ and let $\rho$ be the dominant singularity of $Y(z)$. 
Suppose that $G(z,y)$ is a polynomial and that $\tau$ is such that $G(\rho, \tau)=0$, $G_z(\rho, \tau)\neq 0$, $G_y(\rho, \tau)=0$ and $G_{yy}(\rho,\tau)\neq 0$. Then
$$[z^n]Y(z)=\gamma \rho^{n} n^{-3/2} \left( 1+O( n^{-1/2})\right) \qquad with \ \gamma=\sqrt {\frac{\rho G_z(\rho, \tau)}{2\pi G_{yy}(\rho,\tau)}}.$$ 
\end{theorem}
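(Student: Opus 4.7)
The plan is a classical two-step singularity analysis: first derive the precise local behaviour of $Y(z)$ at its dominant singularity $\rho$, then convert this local expansion into coefficient asymptotics via the Flajolet--Odlyzko transfer theorem.

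For the local step, because $G_y(\rho,\tau)=0$ while $G_{yy}(\rho,\tau)\neq 0$, the implicit function theorem fails at $(\rho,\tau)$, but the situation is that of a simple quadratic branch point. Taylor-expanding $G$ at $(\rho,\tau)$ yields
$$ G(z,y) = G_z(\rho,\tau)(z-\rho) + \tfrac12 G_{yy}(\rho,\tau)(y-\tau)^2 + \text{higher order terms}, $$
and applying the Weierstrass preparation theorem (equivalently, Newton's polygon) shows that the branch of $y(z)$ meeting $\tau$ at $z=\rho$ admits a Puiseux expansion
$$ Y(z) = \tau - \sqrt{\frac{2\rho\,G_z(\rho,\tau)}{G_{yy}(\rho,\tau)}}\,\sqrt{1-z/\rho}\,\bigl(1+o(1)\bigr), \qquad z\to\rho. $$
The sign in front of the square root is forced by $Y$ being the branch with nonnegative Taylor coefficients (so that $Y$ is increasing on $[0,\rho)$), which in turn ensures positivity of the radicand.

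For the transfer step, I would use the standard estimate $[z^n]\sqrt{1-z/\rho}=-\frac{1}{2\sqrt{\pi}}\rho^{-n}n^{-3/2}\bigl(1+O(n^{-1})\bigr)$ together with the fact that, provided $Y(z)$ extends analytically to a $\Delta$-domain at $\rho$, the error terms of the local expansion transfer to the coefficients with polynomial corrections. Combining the two,
$$ [z^n]Y(z) = \frac{1}{2\sqrt{\pi}}\sqrt{\frac{2\rho\,G_z(\rho,\tau)}{G_{yy}(\rho,\tau)}}\,\rho^{-n}\,n^{-3/2}\bigl(1+O(n^{-1/2})\bigr), $$
which coincides with the announced formula after collecting constants into $\gamma=\sqrt{\rho G_z(\rho,\tau)/(2\pi G_{yy}(\rho,\tau))}$.

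The main technical obstacle is the $\Delta$-analyticity of $Y(z)$: one must verify that $\rho$ is the unique singularity of $Y$ on the circle $|z|=\rho$ and that $Y$ continues analytically in a slit neighbourhood of $\rho$. Since $G$ is a polynomial, $Y$ is algebraic and its singularities lie in the finite zero set of the $y$-discriminant of $G$; the hypothesis that $\rho$ is the \emph{dominant} singularity (together with, in the applications to the specific equations of Section~\ref{sec:gfs}, an aperiodicity check on the coefficient sequence or a direct inspection of the discriminant) rules out other singularities of equal modulus, and the simple-branch-point structure established in the first step furnishes the required analytic continuation off the slit $[\rho,\infty)$.
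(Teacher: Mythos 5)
Your sketch is correct and is the standard singularity-analysis argument. There is in fact no ``paper's own proof'' to compare against: Theorem~\ref{thm:asymptotics} is presented purely as a summary of Lemma~VII.3 and Theorems~VI.1 and~VI.3 of Flajolet and Sedgewick, and your two steps (local square-root branch via Newton polygon / Weierstrass preparation, then Flajolet--Odlyzko transfer) are precisely the content of those cited results. Your computation of the constant checks out: from
\[
(Y(z)-\tau)^2 \sim \frac{2\rho\,G_z(\rho,\tau)}{G_{yy}(\rho,\tau)}\bigl(1-z/\rho\bigr)
\quad\text{and}\quad
[z^n]\sqrt{1-z/\rho}\sim -\frac{1}{2\sqrt{\pi}}\,\rho^{-n}n^{-3/2}
\]
one recovers $\gamma=\sqrt{\rho G_z(\rho,\tau)/(2\pi G_{yy}(\rho,\tau))}$, and your $\rho^{-n}$ is the right power (the $\rho^{n}$ printed in the theorem statement is a typo). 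You are also right that $\Delta$-analyticity is the one genuine technical point, and you should be a little more careful there than your phrasing suggests: ``$\rho$ is the dominant singularity'' by itself does not exclude other singularities of the same modulus, so the transfer step implicitly needs $\rho$ to be the \emph{unique} singularity on $|z|=\rho$; in the paper's concrete applications this is verified case by case by locating the roots of the discriminant, exactly as you indicate.
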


Now, to  find the  dominant singularity $\rho$ we use the fact that it must be one of the roots of the discriminant of the equation satisfied by $F_T(z)$, as explained in~\cite[Section VII.7.1]{ancombFS}. Also, Pringsheim's theorem (\cite[Theorem IV.6]{ancombFS}) asserts that $\rho$ is real positive.

Carrying out the calculations for the equations given for the different classes of forests proves the parts in Propositions~\ref{prop:numberforests} and~\ref{prop:forestforest} that do not deal with the number of components.  We give the details for the case of marked forests without isolated vertices given by equation~(\ref{eq:f1s}).

The corresponding discriminant is
$$-16z^3(1-20z+67z^2)^2(4778z^5+7640z^4+793z^3-1454z^2+283z-16).$$
Its real positive roots are $(10-\sqrt{33})/67=0.063513\ldots $ and $(10-\sqrt{33})/67=0.234993\ldots $ from the quadratic factor,  and one root $0.116527\ldots$ of the degree 5 factor. From these candidates, we need to identify which one is the smallest singularity. We follow the methods of~\cite[Section VII.7.1]{ancombFS} to study the behaviour of $F_{T_1^*}(z)$ in a neighbourhood of each of the candidate singularities, starting with the smallest one. It turns out that at $(10-\sqrt{33})/67$ there is a multiple point, but no singularity, whereas the next candidate $0.1165\ldots$ is indeed a singularity. Now we solve equation~(\ref{eq:f1s}) for $z=\rho=0.1165\ldots$; of the four solutions, one is double, which corresponds to the value at the singular point. So $\tau=1.1537\ldots$. Now it is only a matter of checking the hypotheses of Theorem~\ref{thm:asymptotics}.

\bigskip


We need next to refine our generating functions in order to take into account the number of components. We consider the bivariate generating function
$$
F_T(z)=\sum_{n\geq 0} f_{n,k}z^nw^k,$$
where $f_{n,k}$ stands for the number of forests in $\mathcal{F_T}$ with $n$ vertices and $k$ connected components. It is easy to see that equation~(\ref{eq:tf}) becomes
\begin{equation*}
F_T(z,w)=1+wT(zF_T(z,w)).
\end{equation*}

Let $X_{n,k}=[z^nw^k]F_T(z,w)/[z^n]F_T(z)$, that is, the probability that a uniformly chosen forest in $\mathcal{F}_{\mathcal{T}}$ with $n$ vertices has $k$ components. As explained in~\cite[Section 5]{ncconfigsFN} and~\cite[Section IX.7]{ancombFS}, the singular behaviour of $F_T(z,w)$ gives information about the probability $X_{n,k}$. More concretely, for the generating functions we are considering, the singularity $\rho$ of $F_T(z)$  ``lifts'' to a singularity $\rho(w)$ of $F_T(z,w)$ (that means, in particular,  $\rho(1)=\rho$). It can be shown in this situation that the mean of $X_{n,k}$ is $\kappa n+ O(1)$, where $\kappa=-\rho'(1)/\rho$. Moreover, $X_{n,k}$ converges in law to a Gaussian law. This implies  that for each positive $\varepsilon$, $1/2-\varepsilon$ of the forests in $\mathcal{F_T}$ with $n$ vertices have at least $\mu_n n$ components, for sufficiently large $n$.

Again, we provide some detail for the case of marked forests without isolated vertices. The generating function $Y=F_{T_1^*}(z,w)$ satisfies the equation
\begin{align*}
27z(zw+1)^3Y^4-(2z^3w^3+81z^3w^2+18z^2w^2+162z^2w+12zw+81z+4)Y^3+& \\ (18z^2w^2+81z^2w+zw^2+24zw+81z+12)Y^2-(zw^2+12zw+27z+12)Y+4&=0.
\end{align*}
The discriminant of this equation (with respect to $Y$) is a polynomial in $z$ and $w$; it has one double factor which is quadratic in $z$  and a factor that has degree 5 in $z$. Of the five roots of this last factor, we consider the one that at $w=1$ gives the known value of $\rho$, and find $\rho'(1)$ by differentiating with respect to $w$ the degree 5 factor of the discriminant.

Similar calculations give the remaining statements from Propositions~\ref{prop:numberforests} and~\ref{prop:forestforest}. We just mention that for the case of forests where only some of the non-isolated components have marks, the equation that gives the bivariate generating function is
$$F_{T\cup T_1^*}(z,w)=1+T(zF_{T\cup T_1^*}(z,w))+ wT_1^*(zF_{T\cup T_1^*}(z,w)),$$
as we only want to estimate the number of  components that have a mark and not the total number of components.  

\section{Relationship between growth constants}\label{sec:growth}

In this section we explain in a simple way the relationship between growth constants observed at the end of Section~\ref{sec:lbound}, and also prove that the average numbers of components of the corresponding forests are also related. Moreover, we use similar ideas to prove a relationship between the growth constants of nc-forests in the double chain, allowing or not isolated vertices. This relationship enables us to improve the bound $\Omega(13.40^N)$ of Theorem~\ref{thm:lboundforest} to $\Omega(13.61^N)$.

We first focus on nc-forests on points in convex position, allowing or not isolated vertices. Let $f(n)$ 
be the number of forests on $n$ points in convex position, and let $\tilde{f}(n)$ be the number of such forests that have no isolated vertices.  Although we actually know the values of the growth constants, we assume in this section no knowledge about the numbers $f(n)$ and $\tilde{f}(n)$. So first we show that they behave asymptotically as exponentials. For this we use the following lemma on superadditive functions (see~\cite[Lemma 11.6]{coursecombvLW}).

\begin{lemma}\label{lem:limit}
Let $g:\mathbb{N}\rightarrow \mathbb{N}$ be such that $g(i+j)\geq g(i)g(j)$ for all $i,j\in \mathbb{N}$. Then $\lim_{n\rightarrow \infty}g(n)^{1/n}$ exists.
\end{lemma}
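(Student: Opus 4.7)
The plan is to reduce the claim to Fekete's subadditive lemma by passing to logarithms. I would set $h(n) = \log g(n)$; since $g$ takes positive integer values, $h$ is well defined and non-negative, and the multiplicative hypothesis $g(i+j) \ge g(i)g(j)$ translates into the additive one $h(i+j) \ge h(i) + h(j)$. Existence of $\lim_{n\to\infty} g(n)^{1/n}$ in $(0,\infty]$ is then equivalent to existence of $\lim_{n\to\infty} h(n)/n$ in $[0,\infty]$, so it suffices to establish the latter.

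To do so I would define $L = \sup_{n \ge 1} h(n)/n \in [0, \infty]$ and show that $\lim_n h(n)/n = L$. The bound $\limsup_n h(n)/n \le L$ is immediate from the definition of $L$. For the matching lower bound, fix any real $M < L$ and choose $m$ with $h(m)/m > M$. For each $n$, write $n = qm + r$ with $0 \le r < m$; iterating superadditivity $q$ times and using $h(r) \ge 0$ (with $h(0)=0$ by convention, or handling the case $r=0$ separately) gives $h(n) \ge q\,h(m)$. Dividing by $n$,
\[
\frac{h(n)}{n} \;\ge\; \frac{q\,h(m)}{qm + r} \;\xrightarrow[n \to \infty]{}\; \frac{h(m)}{m} \;>\; M,
\]
since $q \to \infty$ while $r$ stays in $[0,m)$ as $n$ grows. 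Hence $\liminf_n h(n)/n \ge M$, and letting $M \nearrow L$ yields $\liminf_n h(n)/n \ge L$. Combining the two bounds, $\lim_n h(n)/n = L$, and exponentiating gives the existence of $\lim_n g(n)^{1/n} = e^L$.

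The argument is essentially the textbook proof of Fekete's lemma, so I do not anticipate any substantial obstacle. The only step that deserves a line of care is the iteration producing $h(n) \ge q\,h(m)$, which is justified by $q-1$ pairwise applications of superadditivity to the decomposition $n = qm + r$ together with the non-negativity of $h$.
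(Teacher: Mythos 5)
Your proof is correct and is precisely the standard Fekete superadditivity argument; the paper itself gives no proof but simply cites van Lint and Wilson (Lemma 11.6), which is this same result proved by this same method, so you have reproduced the intended argument. One small remark: the limit you obtain may a priori be $+\infty$ (when $L=\sup_n h(n)/n=\infty$), but in every application in the paper the counting functions are bounded by $O(c^N)$ for a fixed $c$ (Ajtai et al.), so $L$ is finite and the limit is a genuine real number.
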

 
It is clear that $f$ and $\tilde{f}$  satisfy the hypotheses of this lemma, as the union of a forest on the points $p_1,\ldots, p_i$ and a forest on the points $p_{i+1}, \ldots, p_{i+j}$ gives a forest on $i+j$ points.
Now observe that any nc-forest can be constructed by choosing first some vertices to be isolated and then choosing a nc-forest without isolated vertices; we thus have the relation
$$f(n)=\sum_{i=0}^n \binom{n}{i}\tilde{f}(n-i),$$
where we set $\tilde{f}(0)=1$ for convenience. 
The analysis of this equation gives the  relationship between the growth constants, as we next show. Since there is nothing particular to forests in the argument, we state it in general terms.

\begin{proposition}\label{prop:growth}
Let $f:\mathbb{N}\rightarrow \mathbb{N}$ and $\tilde{f}:\mathbb{N}\rightarrow \mathbb{N}$ be such that there exist constants $b$ and $c$ such that $\lim_{n\rightarrow \infty}f(n)^{1/n}=b$ and $\lim_{n\rightarrow \infty}\tilde{f}(n)^{1/n}=c$.
If for all $n\geq 1$ it holds that 
$$f(n)=\sum_{i=0}^n \binom{n}{i}\tilde{f}(n-i),$$
then $b=1+c$. 
\end{proposition}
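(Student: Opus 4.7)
The plan is to prove $b=1+c$ by establishing the two matching inequalities $b\le 1+c$ and $b\ge 1+c$ separately, using the convolution identity to transport the known exponential growth of $\tilde{f}$ to $f$. The guiding heuristic is the binomial theorem: if $\tilde{f}(m)\approx c^m$, then $f(n)\approx\sum_{i}\binom{n}{i}c^{n-i}=(1+c)^n$. The work consists in turning this into rigorous two-sided exponential bounds using only the hypothesis $\tilde{f}(n)^{1/n}\to c$.

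For the upper bound, I would fix $\varepsilon>0$ and, by bounding the finitely many initial values separately, produce a constant $M_\varepsilon$ such that $\tilde{f}(m)\le M_\varepsilon(c+\varepsilon)^m$ for every $m\ge 0$. Substituting into the identity and applying the binomial theorem gives
$$f(n)\le M_\varepsilon\sum_{i=0}^{n}\binom{n}{i}(c+\varepsilon)^{n-i}=M_\varepsilon(1+c+\varepsilon)^n,$$
so that $b=\lim_n f(n)^{1/n}\le 1+c+\varepsilon$, and letting $\varepsilon\to 0$ yields $b\le 1+c$.

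For the matching lower bound I would fix $\varepsilon\in(0,c)$ (the degenerate case $c=0$ is immediate from $f(n)\ge\tilde{f}(0)=1$, which forces $b\ge 1$) and choose $n_0$ so that $\tilde{f}(m)\ge(c-\varepsilon)^m$ for all $m\ge n_0$. Discarding the terms with $n-i<n_0$ in the identity,
$$f(n)\ge\sum_{i=0}^{n-n_0}\binom{n}{i}(c-\varepsilon)^{n-i}=(1+c-\varepsilon)^n-R_n,$$
where the truncation tail $R_n=\sum_{j=0}^{n_0-1}\binom{n}{j}(c-\varepsilon)^j$ is polynomial in $n$ of degree less than $n_0$. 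Since $1+c-\varepsilon>1$, the exponential term dominates the polynomial remainder, so $f(n)^{1/n}\to b$ forces $b\ge 1+c-\varepsilon$, and letting $\varepsilon\to 0$ gives $b\ge 1+c$.

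The only delicate point will be this lower bound: the inequality $\tilde{f}(m)\ge(c-\varepsilon)^m$ is available only for $m$ large, so the sum must be truncated at $i=n-n_0$ and the contribution of the discarded terms has to be recognised as polynomial in $n$ and therefore negligible against $(1+c-\varepsilon)^n$. Existence of the two limits is of course essential throughout; in the intended application to non-crossing forests it is supplied by Lemma~\ref{lem:limit} via the superadditivity of $f$ and $\tilde{f}$.
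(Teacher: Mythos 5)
Your proof is correct. The upper bound is essentially the paper's argument in slightly different dress: the paper writes $\tilde f(m)=\tilde h(m)c^m$ and bounds $\tilde h(m)$ by its maximum over $m\le n$ before applying the binomial theorem, whereas you absorb the subexponential factor into a constant $M_\varepsilon$ and inflate $c$ to $c+\varepsilon$; both lead directly to $b\le 1+c$. The lower bound, however, takes a genuinely different route. The paper keeps only the single summand $i=\lfloor\alpha n\rfloor$, estimates $\binom{n}{\alpha n}$ via the binary entropy function, and then maximizes the exponential rate $2^{H(\alpha)}(c(1-\varepsilon))^{1-\alpha}$ over $\alpha$, which peaks at $\alpha=(1+c(1-\varepsilon))^{-1}$ with value $1+c(1-\varepsilon)$. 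You instead keep the whole truncated sum, recognize it as a binomial expansion of $(1+c-\varepsilon)^n$, and argue that the discarded tail $R_n=\sum_{j<n_0}\binom{n}{j}(c-\varepsilon)^j$ is a fixed-degree polynomial in $n$ and hence negligible. Your argument is more elementary (no entropy function, no optimization), and is arguably the cleanest way to handle this exact convolution. The paper's single-term-plus-entropy technique has the advantage that it transfers verbatim to Proposition~\ref{prop:growthconstants}, where the convolution carries $\binom{n}{i}^2$ and the binomial theorem no longer applies directly, so the authors presumably chose it for uniformity across the two results. One small point worth making explicit in your write-up: the existence of $M_\varepsilon$ requires separating the finitely many indices $m$ below the threshold where $\tilde f(m)\le(c+\varepsilon)^m$ first holds and absorbing their (bounded) ratios $\tilde f(m)/(c+\varepsilon)^m$ into the constant, which is what you gesture at with ``bounding the finitely many initial values separately'' — that is fine but should be spelled out in a final version, as it is the only place where uniformity in $m$ is needed.
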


\begin{proof}
 Let us start by writing $f(n)=h(n)b^n$ and $\tilde{f}(n)=\tilde{h}(n)c^n$ for some functions $h:\mathbb{N}\rightarrow \mathbb{R}$ and $\tilde{h}:\mathbb{N}\rightarrow \mathbb{R}$ such that $\lim_{n\rightarrow \infty}h(n)^{1/n}=\lim_{n\rightarrow \infty}\tilde{h}(n)^{1/n}=1$. 
Then,
\begin{equation}\label{eq:h}h(n)b^n=\sum_{i=0}^n \binom{n}{i}\tilde{h}(n-i)c^{n-i}.
\end{equation}
We now give upper and lower bounds for the right-hand side that imply $b=c+1$. For the upper bound, take $j$  such that $\tilde{h}(n-j)\geq \tilde{h}(n-i)$ for all $i\leq n$ (note that $j$ depends on $n$). 
Then
$$  \sum_{i=0}^n \binom{n}{i}\tilde{h}(n-i)c^{n-i}\leq \tilde{h}(n-j)\sum_{i=0}^n \binom{n}{i}c^{n-i}=\tilde{h}(n-j) (c+1)^n.$$
By taking the limit of the $n$-th roots of both sides of the inequality $h(n)b^n\leq \tilde{h}(n-j) (c+1)^n$ we immediately get $b\leq c+1$.

For the lower bound, fix any $\varepsilon>0$. Then  there is  $N$ such that  $\tilde{h}(n)\geq (1-\varepsilon)^n$ for all $n\geq N$. For $\alpha$ with $0<\alpha<1$ and  $n\geq N/(1-\alpha)$ we bound the sum in~(\ref{eq:h}) by the term corresponding to $i=\alpha n$, resulting in 
$$\sum_{i=0}^n \binom{n}{i}\tilde{h}(n-i)c^{n-i}\geq \binom{n}{\alpha n} (c(1-\varepsilon))^{n-\alpha n}.$$ 
Using the entropy function as in Section~\ref{sec:lbound}, we have $$\lim_{n\rightarrow \infty}\left(\binom{n}{\alpha n} (c(1-\varepsilon))^{n-\alpha n}\right)^{1/n}=2^{H(\alpha)}(c(1-\varepsilon))^{1-\alpha},$$
and thus $b\geq 2^{H(\alpha)}(c(1-\varepsilon))^{1-\alpha}$ for all $\alpha$. The maximum is achieved at $\alpha=(1+c(1-\varepsilon))^{-1}$ with value $1+c(1-\varepsilon)$. As this holds for all $\varepsilon>0$, we conclude that $b\geq 1+c$, as needed. 
\end{proof}

The relationship between the growth constants in items (ii) and (iii) from Proposition~\ref{prop:numberforests} also follows from the proposition above. Based on results from~\cite{ncconfigsFN}, we immediately get that the growth constant for non-crossing graphs without isolated vertices is $5+4\sqrt{2}$ and that the growth constant for non-crossing partitions without singleton sets is $3$.

\bigskip

There is also a relationship between the average number of components in forests and forests without isolated vertices. Let $\mathcal{F}_n$ and $\tilde{\mathcal{F}}_n$ be the sets of nc-forests with $n$ vertices and of nc-forests with $n$ vertices, none of them being isolated. Given any forest $F$, write $k(F)$ for the number of components of $F$, and let $\mu_n$ and $\tilde{\mu}_n$ denote the mean of the number of components in forests from $\mathcal{F}_n$ and $\tilde{\mathcal{F}}_n$, respectively. Then
$$\mu_n=\frac{\sum_{F\in \mathcal{F}_n}k(F)}{f(n)}=\frac{1}{f(n)}\sum_{i=0}^n\binom{n}{i}\sum_{\tilde{F}\in \tilde{\mathcal{F}}_{n-i}} (k(\tilde{F})+i) =\frac{1}{f(n)}\sum_{i=0}^n \binom{n}{i} \tilde{f}(n-i) (\tilde{\mu}_{n-i}+i). $$
As in the case of the growth constants, this equation determines the limit of $\mu_n/n$, provided we assume that $\lim_{n\rightarrow \infty} \tilde{\mu}_n/n$ exists.

\begin{proposition}\label{prop:mu}
Let $f$, $\tilde{f}$, $b$, and $c$, be as in Proposition~\ref{prop:growth}, and let $(\mu_n)$ and $(\tilde{\mu}_n)$ be real-valued sequences such that
\begin{equation}\label{eq:mu}
\mu_n=\frac{1}{f(n)}\sum_{i=0}^n \binom{n}{i} \tilde{f}(n-i) (\tilde{\mu}_{n-i}+i). 
\end{equation}
If there exists a constant $\tilde{\mu}$ such that $\lim_{n\rightarrow \infty} \tilde{\mu}_n/n=\tilde{\mu}$, then 
$$\lim_{n\rightarrow \infty} \frac{\mu_n}{n}= \frac{1+c\tilde{\mu}}{b}.$$
\end{proposition}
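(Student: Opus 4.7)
The plan is to mirror the approach of Proposition~\ref{prop:growth} by splitting the right-hand side of~(\ref{eq:mu}) into two pieces. First, I would invoke Proposition~\ref{prop:growth} to record the identity $b=1+c$, which will be needed at the end to recognise the target constant. Next, write $\mu_n f(n)=A_n+B_n$ with $A_n=\sum_i\binom{n}{i}\tilde f(n-i)\tilde\mu_{n-i}$ and $B_n=\sum_i i\binom{n}{i}\tilde f(n-i)$. The identity $i\binom{n}{i}=n\binom{n-1}{i-1}$, combined with the convolution $f(n-1)=\sum_j\binom{n-1}{j}\tilde f(n-1-j)$, immediately gives $B_n=nf(n-1)$. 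A symmetric ``special-vertex-not-isolated'' calculation then yields the key identity
$$\sum_{i=0}^n(n-i)\binom{n}{i}\tilde f(n-i)=n\bigl(f(n)-f(n-1)\bigr),$$
which is the combinatorial engine of the argument.

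To control $A_n$ I would use the hypothesis $\tilde\mu_m/m\to\tilde\mu$: for any $\varepsilon>0$ there is $N=N_\varepsilon$ with $|\tilde\mu_m-\tilde\mu m|\leq\varepsilon m$ for $m\geq N$. Splitting $A_n$ at $n-i=N$, on the bulk range $n-i\geq N$ one has $(\tilde\mu-\varepsilon)(n-i)\leq\tilde\mu_{n-i}\leq(\tilde\mu+\varepsilon)(n-i)$, so that by the displayed identity this bulk is squeezed between $(\tilde\mu\pm\varepsilon)n(f(n)-f(n-1))$. The finitely many terms with $n-i<N$ contribute at most $O(n^{N})$, which is swamped by the exponential $f(n)\sim b^{n}$. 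Dividing by $nf(n)$ and letting $\varepsilon\to 0$ then gives
$$\frac{\mu_n}{n}=\tilde\mu+(1-\tilde\mu)\,\frac{f(n-1)}{f(n)}+o(1).$$

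It remains to establish $f(n-1)/f(n)\to 1/b$. For this I would start from the Pascal-rule decomposition $f(n)=f(n-1)+\sum_{j=0}^{n-1}\binom{n-1}{j}\tilde f(j+1)$ and prove, by exactly the same upper-bound and single-term lower-bound scheme used in Proposition~\ref{prop:growth}, that the second summand is asymptotically equivalent to $cf(n-1)$. This yields $f(n)/f(n-1)\to 1+c=b$, and substituting into the previous display together with $b=1+c$ gives
$$\lim_{n\to\infty}\frac{\mu_n}{n}=\tilde\mu+\frac{1-\tilde\mu}{b}=\frac{\tilde\mu(b-1)+1}{b}=\frac{1+c\tilde\mu}{b},$$
as required.

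The main obstacle I expect is this last step: the hypothesis $f(n)^{1/n}\to b$ alone does not guarantee convergence of the consecutive ratios $f(n-1)/f(n)$, so one really has to exploit the convolution structure of $f$ in terms of $\tilde f$ in order to upgrade $n$-th-root convergence into ratio convergence. Once this ratio limit is in hand, the rest of the argument is essentially bookkeeping built around the combinatorial identity for $\sum(n-i)\binom{n}{i}\tilde f(n-i)$.
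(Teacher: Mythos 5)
Your proposal follows the same overall scheme as the paper's proof (split the right-hand side into the two sums $A_n$ and $B_n$, compute $B_n=nf(n-1)$, then control $A_n$), but it handles $A_n$ in a genuinely different and cleaner way. The paper rewrites $A_n$ as $n\sum_i\binom{n-1}{i}\tilde f(n-1-i)\frac{\tilde f(n-i)}{\tilde f(n-1-i)}\frac{\tilde\mu_{n-i}}{n-i}$ and then invokes $\frac{\tilde f(m)}{\tilde f(m-1)}\frac{\tilde\mu_m}{m}\to c\tilde\mu$; this silently assumes the \emph{ratio} limit $\tilde f(m)/\tilde f(m-1)\to c$, which is strictly stronger than the stated hypothesis $\tilde f(m)^{1/m}\to c$. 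Your identity $\sum_i(n-i)\binom{n}{i}\tilde f(n-i)=n\bigl(f(n)-f(n-1)\bigr)$ lets you squeeze $A_n$ using only $\tilde\mu_m/m\to\tilde\mu$, sidestepping that extra assumption entirely. That is a real improvement.

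However, there is a genuine gap in the last step, and you correctly anticipate where it is. You need $f(n-1)/f(n)\to 1/b$, and you propose to establish this ``by exactly the same upper-bound and single-term lower-bound scheme used in Proposition~\ref{prop:growth}''. That scheme cannot deliver ratio convergence: it only sandwiches the $n$-th root of a sum between $1+c(1-\varepsilon)$ and $1+c$, and $n$-th-root asymptotics are blind to the bounded multiplicative fluctuations that determine whether $f(n)/f(n-1)$ settles down. Concretely, showing $\sum_j\binom{n-1}{j}\tilde f(j+1)\sim c\,f(n-1)$ is \emph{at least} as hard as what you started with, since both sides compare $\tilde f$ evaluated at shifted arguments inside the same binomial convolution. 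So as written, the argument for the ratio limit does not close. To be fair, the paper's own proof has the same hole — it uses $f(n-1)/f(n)\to 1/b$ without deriving it from $f(n)^{1/n}\to b$ — and additionally uses ratio convergence of $\tilde f$, which your proof avoids; but the proposition as stated is not fully proved by either argument without a further input (for instance, the refined asymptotics $f(n)\sim b\,n^{-3/2}\omega^n$ that hold for the concrete counting sequences to which the proposition is applied, which do give the ratio limit directly).
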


\begin{proof}
We first rewrite the right-hand side of~(\ref{eq:mu}). Easy manipulation gives that the term $\sum_{i=0}^n \binom{n}{i} \tilde{f}(n-i) i$ equals $n\sum_{i=0}^{n-1}\binom{n-1}{i}\tilde{f}(n-1-i)=nf(n-1)$. As for the term $\sum_{i=0}^n \binom{n}{i} \tilde{f}(n-i) \tilde{\mu}_{n-i}$, it can be rewritten as
$$n\sum_{i=0}^{n-1}\binom{n-1}{i}\tilde{f}(n-1-i)\frac{\tilde{f}(n-i)}{\tilde{f}(n-1-i)}\frac{\tilde{\mu}_{n-i}}{n-i}.$$ 

 We now compute $\lim_{n\rightarrow \infty} \mu_n/n$. From the calculations above,
$$\lim_{n\rightarrow \infty} \frac{\mu_n}{n}=\frac{1}{b}+\lim_{n\rightarrow \infty}\frac{1}{f(n)}\sum_{i=0}^{n-1}\binom{n-1}{i}\tilde{f}(n-1-i)\frac{\tilde{f}(n-i)}{\tilde{f}(n-1-i)}\frac{\tilde{\mu}_{n-i}}{n-i}.$$
As
 $$\lim_{n\rightarrow \infty} \frac{\tilde{f}(n)}{\tilde{f}(n-1)}\frac{\tilde{\mu}_{n}}{n}=c\tilde{\mu},$$ for every $\varepsilon>0$ there exists $N$ such that 
$$ c\tilde{\mu}-\varepsilon< \frac{\tilde{f}(n)}{\tilde{f}(n-1)}\frac{\tilde{\mu}_{n}}{n}<c\tilde{\mu}+\varepsilon \mbox{ for } n\geq N.$$
Therefore,
\begin{align}
\frac{1}{f(n)}\sum_{i=0}^{n-1}\binom{n-1}{i}\tilde{f}(n-1-i)\frac{\tilde{f}(n-i)}{\tilde{f}(n-1-i)}\frac{\tilde{\mu}_{n-i}}{n-i}& \leq (c\tilde{\mu}+\varepsilon)\frac{1}{f(n)}\sum_{i=0}^{n-N}\binom{n-1}{i} \tilde{f}(n-1-i)+\notag \\
& \frac{1}{f(n)}\sum_{i=n-N+1}^{n}\binom{n-1}{i}\tilde{f}(n-i)\frac{\tilde{\mu}_{n-i}}{n-i}. \label{ineq}
\end{align}
We claim that the right-hand side tends to $(c\tilde{\mu}+\varepsilon)/b$. First notice that 
the numerator of the second summand  is a sum of $N$ terms that is easily seen to be $O(n^N)$. Similarly, in the first summand the sum of the missing terms for $i$ from $n-N+1$ to $n$ is also $O(n^N)$. Hence, the right-hand side of inequality~(\ref{ineq}) is 
$$(c\tilde{\mu}+\varepsilon)\frac{1}{f(n)}\left( \sum_{i=0}^{n}\binom{n-1}{i} \tilde{f}(n-1-i)+O(n^N)\right)=(c\tilde{\mu}+\varepsilon)\frac{f(n-1)}{f(n)}+
(c\tilde{\mu}+\varepsilon)\frac{O(n^N)}{f(n)},$$
which tends to $(c\tilde{\mu}+\varepsilon)/b$ as desired. 

Thus,
$$\lim_{n\rightarrow \infty} \frac{\mu_n}{n}\leq \frac{1}{b}+\frac{c\tilde{\mu}+\varepsilon}{b},$$
for all $\varepsilon>0$. As an analogous argument shows that
$$\lim_{n\rightarrow \infty} \frac{\mu_n}{n}\geq \frac{1}{b}+\frac{c\tilde{\mu}-\varepsilon}{b}$$
for all $\varepsilon>0$, we conclude that $\lim_{n\rightarrow \infty} \mu_n/n$ exists and equals $(1+c\tilde{\mu})/b.$ 
\end{proof}

The reader can check that this relationship between $\mu$ and $\tilde{\mu}$ holds for the numbers given in Proposition~\ref{prop:numberforests}.

\bigskip

We now turn our attention to the number of nc-forests of the double chain,  allowing or not isolated vertices. We cannot prove that the respective growth constants differ exactly by one (although we believe this is the case), but only that they differ by at least one, which is enough to obtain a lower bound better than the one in Theorem~\ref{thm:lboundforest}.

\begin{proposition}\label{prop:growthconstants}
Let $g(n)$ be the number of nc-forests of the double chain with $2n$ points, and let $\tilde{g}(n)$ be the number of those that have no isolated vertices. There exist constants $\gamma$ and $\tilde{\gamma}$ such that $\lim_{n\rightarrow \infty} g(n)^{1/n}=\gamma^2$ and $\lim_{n\rightarrow \infty} \tilde{g}(n)^{1/n}=\tilde{\gamma}^2$, and these constants satisfy $\gamma\geq\tilde{\gamma}+1$.
\end{proposition}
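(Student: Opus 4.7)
The plan is to mimic the proof of Proposition~\ref{prop:growth}, replacing the exact binomial identity by a one-sided geometric inequality tailored to the double chain.

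First, I would establish existence of $\gamma$ and $\tilde\gamma$ by invoking Lemma~\ref{lem:limit}. Two double chains, of $2n$ and $2m$ points, can be placed side by side---after an affine adjustment of the coordinates if necessary---to form a double chain of $2(n+m)$ points in which no edge of an nc-forest on the left piece crosses any edge of an nc-forest on the right piece. Hence the union of any such pair is an nc-forest of the combined configuration, giving $g(n+m)\geq g(n)g(m)$, and the same argument applies verbatim to $\tilde g$. Since $g(n),\tilde g(n)\geq 1$, the limits $\gamma^{2}=\lim g(n)^{1/n}$ and $\tilde\gamma^{2}=\lim \tilde g(n)^{1/n}$ exist and are positive.

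The heart of the proof is the pointwise inequality
$$g(N)\;\geq\;\binom{N}{k}^{2}\,\tilde g(N-k),\qquad 0\le k\le N.$$
I would prove it by the following injective construction on the double chain with $N$ points on each chain: choose $k$ points on the upper chain and $k$ points on the lower chain (in $\binom{N}{k}^{2}$ ways), declare these $2k$ points to be isolated vertices, and on the remaining $2(N-k)$ points place any of the $\tilde g(N-k)$ nc-forests without isolated vertices. The surviving points inherit both convex position on each chain and the defining separation property of the double chain, so they themselves form a double chain, and the forest placed on them is non-crossing in the enclosing configuration. Injectivity is immediate, because the designated $2k$ vertices are exactly the isolated vertices of the output forest.

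Taking $N$-th roots with $k=\alpha N$ and using $\binom{N}{\alpha N}^{1/N}\to 2^{H(\alpha)}$ gives $\gamma\geq 2^{H(\alpha)}\tilde\gamma^{\,1-\alpha}$ for every $\alpha\in(0,1)$; maximizing the right-hand side is exactly the optimization carried out at the end of the proof of Proposition~\ref{prop:growth}, with maximum value $1+\tilde\gamma$ attained at $\alpha=1/(1+\tilde\gamma)$, so $\gamma\geq\tilde\gamma+1$. I expect the only delicate point to be the geometric verification that the $2(N-k)$ surviving vertices again form a valid double chain; once this is granted, the remainder is the same entropy calculation as in Proposition~\ref{prop:growth}. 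The reason this argument cannot be sharpened to an equality is that a full decomposition of $g(N)$ according to the set of isolated vertices sums over all subsets $S\subseteq[2N]$, of which only the $\binom{N}{k}^{2}$ balanced ones again form a double chain and contribute a clean $\tilde g$ factor.
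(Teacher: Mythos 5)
Your argument is correct and essentially identical to the paper's: both proofs designate $k$ isolated vertices on each chain, place an isolated-vertex-free nc-forest on the remaining sub-double-chain, and then run the same entropy maximization of $2^{H(\alpha)}\tilde\gamma^{1-\alpha}$ over $\alpha$ to obtain $\gamma\ge 1+\tilde\gamma$. The paper states the inequality as $g(n)\ge\sum_{i}\binom{n}{i}^{2}\tilde g(n-i)$ and then keeps only the term $i=\alpha n$, which is exactly your single-term bound; your direct passage to the limit is a mild streamlining of the paper's $\varepsilon$-argument via $\tilde g(n)=\tilde h(n)\tilde\gamma^{2n}$.
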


\begin{proof}
That the growth constants exist is an immediate consequence of Lemma~\ref{lem:limit}. We claim the following relation
\begin{equation}\label{eq:g}
g(n)\geq \sum_{i=0}^n\binom{n}{i}^2\tilde{g}(n-i).
\end{equation}
Indeed, the right-hand side counts the number of nc-forests in the double chain that have the same number of isolated vertices in each chain. As before, write $\tilde{g}(n)=\tilde{h}(n)\tilde{\gamma}^n$ with $\lim_{n\rightarrow \infty} \tilde{h}(n)^{1/n}=1$. For every $\varepsilon>0$ there exists $N$ such that $\tilde{h}(n)\geq (1-\varepsilon)^n$ if $n\geq N$. Thus, for any $\alpha$ and $n\geq N/(1-\alpha)$ we have the lower bound
$$\sum_{i=0}^n\binom{n}{i}^2\tilde{g}(n-i)\geq \binom{n}{\alpha n}^2(1-\varepsilon)^{n-\alpha n} \tilde{\gamma}^{2(n-\alpha n)}.$$
The $n$-th root of the left-hand side tends to $2^{2H(\alpha)}(\tilde{\gamma}(1-\varepsilon))^{1-\alpha}$, which is maximized at $\alpha=(1+\tilde{\gamma}(1-\varepsilon))^{-1}$ with value $(1+\tilde{\gamma}(1-\varepsilon))^2$. As this holds for all $\varepsilon >0$, we conclude that $\gamma\geq \tilde{\gamma}+1$.
\end{proof}

We remark that $\left(\sum_{i=0}^n\binom{n}{i}^2\tilde{\gamma}^{2(n-i)}\right)^{1/n}$ tends indeed to $(1+\tilde{\gamma})^2$. For this,  we write the sum as  an evaluation of the Legendre polynomial $P_n(x)=\sum_{k=0}^n\binom{n}{k}^2(x-1)^{n-k}(x+1)^k/2^n $~(\cite{AS}), that is,
$$\sum_{i=0}^n\binom{n}{i}^2\tilde{\gamma}^{2(n-i)}=(1-\tilde{\gamma}^2)^nP_n\left( \frac{1+\tilde{\gamma}^2}{1-\tilde{\gamma}^2}\right).$$
As the generating function $\sum_{n\geq 0} P_n(x)t^n  $ is $(1-2tx +t^2)^{-1/2}$ and the inverse of the dominant singularity of the generating function gives the exponential growth of its coefficients (see~\cite[Section IV.3.2]{ancombFS}), for a fixed value of $x$ we have $\lim_{n\rightarrow \infty} P_n(x)^{1/n}=(x-\sqrt{x^2-1})^{-1}=x+\sqrt{x^2-1}$, which for $x=(1+\tilde{\gamma}^2)(1-\tilde{\gamma}^2)$ gives the claimed result.

We actually believe that relation~(\ref{eq:g}) is asymptotically an equality, and we conjecture that $\gamma=\tilde{\gamma}+1$.  

\bigskip

We can apply Proposition~\ref{prop:growthconstants} by taking the number of nc-trees as a lower bound for the number of nc-forests without isolated vertices. Thus, from Theorem~\ref{thm:lbound} we immediately get that there are $\Omega(13.52^N)$ nc-forests in the double chain. We can do slightly better by using a construction similar to the ones in Section~\ref{sec:lbound}. 

\begin{corollary}
The double chain on $N$ points has $\Omega(13.61^N)$ non-crossing forests.
\end{corollary}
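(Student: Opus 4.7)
The plan is to combine Proposition~\ref{prop:growthconstants} with a lower bound of $\Omega(12.61^{N})$ on the number of non-crossing forests of the double chain that have no isolated vertex. Writing $\tilde\gamma$ for the growth constant of such forests, once $\tilde\gamma\geq 12.61$ is established, Proposition~\ref{prop:growthconstants} yields $\gamma\geq 1+\tilde\gamma\geq 13.61$ and the claimed $\Omega(13.61^{N})$ bound follows.

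To produce these non-crossing forests, I would use a variant of the construction in the proof of Theorem~\ref{thm:lbound}: omit step t4) (so the output is a forest instead of a tree) and relax the class of the upper forest $F_U$ so that non-isolated components are allowed not to carry a mark. Concretely, as input take $\beta n$ positions on the upper chain that will host isolated marked vertices together with a non-crossing forest on the remaining $(1-\beta)n$ vertices in which every component has at least two vertices and some components (possibly none) bear a mark; an arbitrary non-crossing forest $F_L$ on the lower chain; a subset $M_L$ of $\alpha n$ vertices of $F_L$; and a subset of $\alpha n$ of the marks of $F_U$ to play the role of $b_2$. Adding the interior edges as in steps t1), t2), t3) of the proof of Theorem~\ref{thm:lbound}, but skipping t4), produces a non-crossing graph in which each component of $F_U$ is either an isolated marked vertex (attached to the lower chain through f1) or f2)) or has size at least two, and in which each component of $F_L$ either meets $M_L$ or is attached through t3)---in particular, no vertex is isolated. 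Injectivity follows exactly as in the proofs of Theorems~\ref{thm:lbound} and~\ref{thm:lboundforest}, by recovering from the output $F_U$, $F_L$, the interior edges, the set $M_1$, the matching $M_2$, $M_L$, and $b_2$.

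For the counting, combine Proposition~\ref{prop:forestforest}(ii) with Proposition~\ref{prop:growth} to see that the class of non-crossing forests on $n$ convex-position points with no isolated vertex, some of whose components bear marks, has growth constant $9.8643\ldots-1=8.8643\ldots$. The analogue of Proposition~\ref{prop:mu} applied to the number of marked (rather than total) components gives that the expected number of marks per vertex in such a forest is $\eta=0.1106\cdot 9.8643\ldots/8.8643\ldots\approx 0.1231$, so by Gaussian concentration a positive fraction of these forests have at least $\eta(1-\beta)n$ marks. The construction therefore produces at least (up to subexponential factors)
\[
\binom{n}{\beta n}\,8.8643^{(1-\beta)n}\binom{(\eta(1-\beta)+\beta)n}{\alpha n}\,8.2246^{n}\binom{n}{\alpha n}
\]
distinct non-crossing forests of the double chain without isolated vertex. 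Converting binomials through the binary entropy function and optimizing numerically over $(\alpha,\beta)\in(0,1)^{2}$---the optimum is attained near $(0.263,0.267)$---one verifies that the exponent exceeds $2\log_2(12.61)$, giving $\tilde\gamma\geq 12.61$ and hence $\gamma\geq 13.61$ by Proposition~\ref{prop:growthconstants}.

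The main obstacle will be making precise the statement about the expected number of marks in the no-isolated subclass of Proposition~\ref{prop:forestforest}(ii); this amounts to running the proof of Proposition~\ref{prop:mu} with the statistic being the number of marked components rather than the total number of components, which can be done by setting up the appropriate bivariate generating function exactly as in Section~\ref{sec:gfs}. Beyond that, the remaining steps---the injectivity check for the construction and the two-variable numerical optimization---are routine extensions of the arguments in Section~\ref{sec:lbound}.
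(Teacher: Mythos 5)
Your proposal follows the paper's own strategy closely: modify the construction from Theorem~\ref{thm:lboundforest} so that no isolated vertices remain, establish a lower bound of roughly $\Omega^*(12.61^N)$ on the number of nc-forests of the double chain without isolated vertices, and then apply Proposition~\ref{prop:growthconstants} to add one to the growth constant. The input data you list, the attachment steps, the counting formula
$\binom{n}{\beta n}8.8643^{(1-\beta)n}\binom{(\eta(1-\beta)+\beta)n}{\alpha n}8.2246^n\binom{n}{\alpha n}$
with $\eta\approx 0.1231$, and the optimizing pair $(\alpha,\beta)\approx(0.263,0.267)$ all coincide with what the paper does. One cosmetic difference: for the lower chain you reuse step t3) (connect the leftmost vertex of every unmatched component of $F_L$ to a mark), whereas the paper's step f3) only attaches the remaining isolated vertices of $F_L$; both guarantee no isolated vertices and admit the same count, so this is immaterial.

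The one point where you genuinely depart from the paper is how the constant $0.1231\ldots$ for the expected fraction of marked components is obtained. The paper remarks explicitly that relation~(\ref{eq:mu}) ``does not hold in this case'' and therefore computes this constant directly by analytic methods (bivariate generating function, singularity perturbation). You instead observe that since isolated vertices contribute nothing to the number of \emph{marked} components, the analogue of~(\ref{eq:mu}) with the $+i$ term deleted does hold, and running the proof of Proposition~\ref{prop:mu} on that modified identity yields $\tilde\mu = b\mu/c = 9.8643\cdot 0.1106/8.8643 \approx 0.1231$. This is correct and is arguably a cleaner route to the constant than the paper's; it sidesteps the bivariate singularity analysis for this subclass, provided one is also willing to accept (from the general analytic framework of Section~\ref{sec:gfs}) that the limit $\tilde\mu$ exists and that the deviation from the mean is subexponential. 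You flag this as the remaining obstacle, which is fair; the rest of your argument is sound and matches the paper's.
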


\begin{proof}
We modify the construction that proves the bound in Theorem~\ref{thm:lboundforest} so that no vertex in the resulting forest is isolated. For this, take
\begin{itemize}
\item[c1)] a set of $\ell$ vertices from the upper chain, all of them marked,
\item[c2)] a nc-forest on $n-\ell$ points in convex position such that none of them is isolated and where at least $k-\ell$ components are marked,
\item[c3)] a subset of $k$ vertices among the $\ell$ from c1) and the marks from c2),
\item[c4)] a nc-forest $F_L$ on $n$ points in convex position, and
\item[c5)] a subset $M_L$ of $k$ vertices from $F_L$.
\end{itemize}
Then do the following to obtain a forest without isolated vertices:
\begin{itemize}
\item[f1)] Match the $k$ vertices from c3) with the ones from c5);
\item[f2)] join the other marks from c1) and c2) to the leftmost visible vertex on $M_L$;
\item[f3)] join each vertex in $F_L$ that still is isolated with the rightmost visible mark on the upper chain.
\end{itemize}

We now need to compute the number of nc-forests on $n$ points in convex position required in item c2), and also the average number of marked components. This is done as in Section~\ref{sec:gfs}, giving that the growth constant for the number of such forests is $8.8643\ldots$ and that the average number of marked  components is $\kappa n$ with $\kappa= 0.1231\ldots$, the limiting distribution being again Gaussian. Actually, the value of the growth constant  follows immediately from Proposition~\ref{prop:growth}, but observe that the average number of components needs to be computed with analytic methods, as relation~(\ref{eq:mu}) does not hold in this case.

Now, setting $k=\alpha n$ and $\ell=\beta n$, we conclude that the number of nc-forests of the double chain without isolated vertices is at least
$$\binom{n}{\beta n} 8.8643^{n-\beta n} \binom{0.1231(n-\beta n)+\beta n}{\alpha n}8.2246^n \binom{n}{\alpha n},$$
which for $\alpha=0.263$ and $\beta=0.267$ gives $\Omega^*(12.6108^N)$. Finally, from Proposition~\ref{prop:growthconstants} we conclude that the double chain has $\Omega^*(13.6108^N)$ nc-forests.
\end{proof}

Another consequence of Proposition~\ref{prop:growthconstants} is that the number of nc-spanning trees of the double chain is $O(23.68^N)$. Indeed, in~\cite{D11} it is proved that the corresponding number of nc-forests is $O(24.68^N)$, so the growth constant for spanning trees is at least one less. In the following section we improve this upper bound on the number of nc-spanning trees to $O(22.12^N)$.

\section{The upper bound}{\label{sec:upper}}

Recall that for any nc-spanning tree of the double chain, the vertices on the upper and lower chains induce two forests $F_U$ and $F_L$ on a set of $n$ points in convex position and a forest $F_I$ formed by interior edges, i.e., edges with one endpoint on each chain.
We first count the number of forests $F_I$ with a given number of edges.

\begin{proposition}{\label{prop:countInterior}}
Let $F_I(N,k)$ be the set of nc-forests in the interior of the double chain on $N=2n$ vertices with $k$ edges.
The number of forests $|F_I(N,k)|$ is
\begin{equation}\label{eq:forests}
|F_I(N,k)|=\sum_{\ell=1}^{\min(k,n)}\sum_{i=1}^{n-\ell+1}\sum_{j=1}^{n-\ell+1}{{{n-i}\choose{\ell-1}} {{n-j}\choose{\ell-1}} {{2n-\ell-i-j+1}\choose{k-\ell}}  }.
\end{equation}
\end{proposition}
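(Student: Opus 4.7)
The plan is to classify each non-crossing interior forest $F_I$ with $k$ edges on the double chain by a triple $(\ell,i,j)$, where $i$ and $j$ are the leftmost upper and leftmost lower vertex indices incident to any edge of $F_I$, and $\ell$ tracks how the components distribute along the two chains. The first step is a geometric observation: because the upper chain is convex and the lower chain is concave, two components of $F_I$ cannot have interleaving upper-vertex supports, since any edge of one crossing over a vertex of the other would force two interior edges to cross. Hence the components partition the used upper vertices into $\ell$ disjoint consecutive intervals in $\{1,\dots,n\}$, and similarly on the lower chain. Recording the leftmost upper vertex of each of the remaining $\ell-1$ components as an $(\ell-1)$-subset of $\{i+1,\dots,n\}$, and symmetrically on the lower chain, accounts for the factor $\binom{n-i}{\ell-1}\binom{n-j}{\ell-1}$.

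The second step is to count, given this skeleton of leftmost markers on each chain, the number of extensions to a $k$-edge non-crossing forest. Each of the $\ell$ components is a non-crossing bipartite spanning tree on a subset of its strip's vertices, and the number of such spanning trees on $u$ upper and $v$ lower chain vertices is the classical $\binom{u+v-2}{u-1}$ (a standard lattice-path count on the double chain). The heart of the argument is to show that when one sums over how many extra upper and lower vertices lie in each strip together with the spanning-tree shape of each component, subject to having $k$ total edges, the result collapses to the single binomial $\binom{2n-\ell-i-j+1}{k-\ell}$. I would prove this by exhibiting a direct bijection between extensions of the skeleton and $(k-\ell)$-subsets of a linearly ordered pool of $2n-\ell-i-j+1$ slots. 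The pool is built by concatenating the $n-i$ free upper positions, the $n-j$ free lower positions, together with $\ell-1$ extra ``adjustment'' slots that encode how consecutive strips are linked (for instance, whether two adjacent components share a boundary endpoint or remain disjoint). Each selected pool slot then canonically triggers the addition of one extra edge, under a left-to-right construction rule similar in spirit to the ones used in Section~\ref{sec:lbound}.

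The main obstacle is verifying that this encoding is a bijection. One must confirm that every non-crossing extension of the skeleton to a $k$-edge forest arises from exactly one subset of the pool, and conversely that every choice of $(k-\ell)$ slots produces a valid non-crossing forest; this boils down to checking that the non-crossing constraint, together with the ``adjustment'' slots, uniquely determines the placement of each decoded edge. Equivalently, the identity reduces to the algebraic claim that
\[
\sum_{S_U,S_L}\;\prod_{c=1}^{\ell}\Bigl(\sum_{u_c,v_c\ge 1}\binom{A_c}{u_c-1}\binom{B_c}{v_c-1}\binom{u_c+v_c-2}{u_c-1}\,z^{u_c+v_c-1}\Bigr)= \binom{n-i}{\ell-1}\binom{n-j}{\ell-1}\sum_{k\ge\ell}\binom{2n-\ell-i-j+1}{k-\ell}z^k,
\]
with the marker-induced strip sizes $A_c,B_c$, which I would verify either by an inductive argument on $\ell$ or by a direct generating-function computation. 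Once this collapse is established, summing over $\ell$, $i$ and $j$ yields the claimed formula.
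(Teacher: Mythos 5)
Your decomposition does not match the one the paper actually uses, and the key step — that the number of extensions of your ``skeleton'' collapses to the single binomial $\binom{2n-\ell-i-j+1}{k-\ell}$ — is false as stated.

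The crucial issue is the meaning of $\ell$. You take $\ell$ to be the number of connected components of $F_I$ (you say the components cut the used vertices into $\ell$ consecutive intervals on each chain, and you record $\ell-1$ leftmost markers). In the paper, $\ell$ is instead the size of a canonically defined matching $M_2\subseteq F_I$, produced by a left-to-right scan that picks the rightmost edge at each selected upper vertex. These two quantities do not coincide in general: the matching can be strictly larger than the number of components. A minimal counterexample is $n=2$, $k=3$: the two non-crossing interior forests are the paths $v_1-u_1-v_2-u_2$ and $v_2-u_2-v_1-u_1$. Both have a single component, both have leftmost used upper vertex $1$ and leftmost used lower vertex $1$, so under your classification they both land in the class $(\ell,i,j)=(1,1,1)$, yet $\binom{2n-\ell-i-j+1}{k-\ell}=\binom{2}{2}=1$ allows for only one forest. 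The paper's algorithm assigns these two forests matchings of sizes $1$ and $2$ respectively, so they fall into different $\ell$-classes and the count works out. (A secondary discrepancy: the paper's $j$ records the distance of the \emph{rightmost} matched lower vertex from the right end of the chain, not the leftmost used lower vertex; the formula happens to be symmetric in $i$ and $j$, so this alone might be repairable, but it shows the decomposition you describe is not the intended one.)

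Because the statistic $\ell$ is misidentified, the algebraic identity you pose at the end cannot be correct, and the bijection you sketch (whose verification you acknowledge leaving open) has no chance of succeeding in this form. The idea you are missing is that one should not try to control the component structure of $F_I$ directly. Instead, define a matching $M_2\subseteq F_I$ by a canonical scan; then $F_I$ is recovered from $M_2$ by adding a subset of ``optional'' edges, and --- this is the point that makes the argument clean --- the set of optional edges is completely determined by $M_2$ and has size exactly $2n-\ell-i-j+1$ depending only on $(\ell,i,j)$. Once this is in place, the factor $\binom{n-i}{\ell-1}\binom{n-j}{\ell-1}$ just chooses the endpoints of $M_2$, and the last binomial chooses which optional edges to include; no separate summation over strip sizes or spanning-tree shapes ever arises, so there is no generating-function identity to prove.
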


\begin{proof}
We assign each forest of $F_I(N,k)$ to a unique matching described by the following algorithm:
Scan the edges of the forest from left to right. 
Let $u_1$ be the vertex on the upper chain incident to the first edge encountered. The rightmost edge incident to $u_1$ is the first edge of the matching. Let $d_1$ be the other endpoint of this first edge of the matching on the lower chain. 
Assume the first $r \geq 1$ edges of the matching are determined and let $d_r$ be the endpoint on the lower chain of the last edge of the matching encountered so far. Let $u_{r+1}$ be the vertex on the upper chain incident to the next edge of the forest which does not have $d_r$ as its other endpoint. Then the rightmost edge incident to $u_{r+1}$ is the next edge of the matching. The algorithm stops when all the edges of the forest are scanned. The resulting matching has between $1$ and $n$ edges.  For the forest in Figure~\ref{fig:lbtree} the matching corresponds to the bold edges (the edges of $M_2$). 

Now, to count  $|F_I(N,k)|$, we first consider all matchings $M_2$ with a fixed number $\ell$ of edges, and  then sum over all possible values of $\ell$.
We extend each matching $M_2$ in all possible ways to a forest $F_I(M_2)$ which has $M_2$ as its uniquely assigned matching. This can be done by scanning the interior of the double chain from left to right again. Denote the $\ell$ edges of $M_2$ by $u_id_i$, $1 \leq i \leq \ell;$ $u_i$ is the vertex on the upper chain.  First examine the points of the double chain to the left of $u_1d_1$, the first edge of $M_2$. The vertices on the upper chain before $u_1$ can not have incident edges in $F_I(M_2)$. The vertices on the lower chain before $d_1$ can (optionally) be connected to $u_1$. Assume we have scanned all points to the left of the edge $u_rd_r$, $r \geq 1$.   Then $d_r$ can (optionally) be connected to all vertices on the upper chain from $u_r$ to $u_{r+1}.$ And $u_{r+1}$ can (optionally) be connected to all vertices from $d_r$ to $d_{r+1}$ on the lower chain. We continue in this way until we reach the last edge  $u_\ell d_\ell$ of $M_2$. Then all the vertices on the upper chain on the right of $u_\ell$ can (optionally) be connected to $d_\ell$, but the vertices on the lower chain on the right of $d_\ell$ can not have incident edges. Thus, we can build a forest with $k$ edges $F_I(M_2)$, assigned to $M_2$, by adding a subset of $k-\ell$ edges from these optional edges to $M_2$. Altogether there are $2n-1-\ell-(i-1)-(j-1)$ optional edges, where $2n-1$ is the number of edges of a spanning tree of the double chain, the matching has $\ell$ edges, $i-1$ points are on the upper chain to the left of $u_1$ and $j-1$ points are on the lower chain to the right of $d_\ell.$ This gives the factor ${{2n-\ell-i-j+1}\choose{k-\ell}}$ of Equation~(\ref{eq:forests}). The terms ${{n-i}\choose{\ell-1}}$ and ${{n-j}\choose{\ell-1}}$ correspond to the selection of points for building a matching $M_2$ with $\ell$ edges.   
\end{proof}

\begin{theorem}
The double chain on $N$ points has $O(22.12^N)$ non-crossing spanning trees.\end{theorem}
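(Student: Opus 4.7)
The plan is to bound the number of nc-spanning trees by enumerating the triples $(F_U, F_L, F_I)$ underlying them and relaxing the requirement that these three pieces actually assemble into a tree, which only loosens the bound. Every nc-spanning tree of the double chain on $N=2n$ points decomposes uniquely as $F_U \cup F_L \cup F_I$ with $|F_U|+|F_L|+|F_I|=2n-1$, so if $\phi(n,e)$ denotes the number of nc-forests on $n$ points in convex position with exactly $e$ edges, one has
$$
\#\{\text{nc-spanning trees of the double chain}\}\;\leq\;\sum_{a+b+k=2n-1}\phi(n,a)\,\phi(n,b)\,|F_I(N,k)|.
$$
The quantity $\phi(n,e)$ admits a closed form in binomial coefficients, obtainable either from a bivariate refinement of the generating functions of Section~\ref{sec:gfs} or by a direct non-crossing decomposition argument; and $|F_I(N,k)|$ is given explicitly by Proposition~\ref{prop:countInterior}.

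The next step is to substitute the closed-form expressions and approximate every binomial coefficient $\binom{\epsilon n}{\delta n}$ by $2^{\epsilon H(\delta/\epsilon)n}$ via Stirling, exactly as in Section~\ref{sec:lbound}, absorbing polynomial factors. Writing the indices in scaled form $a=\alpha n$, $b=\beta n$, $k=\kappa n$, $\ell=\lambda n$, $i=\mu n$, $j=\nu n$, the generic summand becomes $2^{n\Phi(\alpha,\beta,\kappa,\lambda,\mu,\nu)}$ for an explicit sum of entropy terms, subject to $\alpha+\beta+\kappa=2$ together with natural range constraints such as $0\leq\lambda\leq\min(\kappa,1)$ and $\mu,\nu\leq 1-\lambda$. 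Since the number of lattice points in the domain of summation is polynomial in $n$, the full sum is at most $n^{O(1)}\cdot 2^{n\max\Phi}$.

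Finally I would carry out the numerical maximization of $\Phi$ on this compact polytope and verify $\max\Phi\leq 2\log_2(22.12)$; this yields the desired $O(22.12^N)$. The first-order Lagrange conditions give a transcendental system solvable numerically, and once the optimal $(\alpha,\beta,\kappa,\lambda,\mu,\nu)$ is identified the bound follows by direct evaluation of $\Phi$ together with the polynomial overhead from the size of the summation domain.

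The main obstacle is the six-parameter optimization. Although $\Phi$ decomposes into separately concave entropy contributions, the coupling through the linear constraint $\alpha+\beta+\kappa=2$ and through the binomial $\binom{2n-\ell-i-j+1}{k-\ell}$ makes the interior critical point analytically inaccessible, so one must rely on a numerical solver and then check that the attained maximum lies strictly in the interior of the admissible region. If it instead lies on the boundary (some ratio $0$ or $1$), the entropy terms extend continuously and one recomputes there to rule out a larger value.
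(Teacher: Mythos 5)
Your proposal is correct and takes essentially the same approach as the paper: relax the tree condition to count triples $(F_U,F_L,F_I)$ independently, use Proposition~\ref{prop:countInterior} for $|F_I(N,k)|$, pass to entropy form via Stirling, and maximize over the scaled parameters. The paper streamlines the optimization before going numerical — it parameterizes chain forests by component count rather than edge count (equivalent via $e=n-c$), drops the $i,j$ summation by bounding $\binom{2n-\ell-i-j+1}{k-\ell}\leq\binom{2n-\ell}{k-\ell}$, shows the chain split is optimized at $k_1=k_2=k/2$, and solves for the optimal matching proportion $\lambda$ in closed form — so the final search is one-dimensional in $\alpha=k/n$; your higher-dimensional numerical maximization would reach the same bound.
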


\begin{proof}
The product of the numbers of forests $F_U$, $F_L$ and $F_I$  gives an upper bound on the number of nc-spanning trees of the double chain. This asymptotic counting can be refined by only counting spanning trees with forests from the set $F_I(N,k)$ for a certain value of $k$. Still, we also will count graphs that contain cycles. 

First, we partition the set of nc-spanning trees of the double chain into $2n-1$ classes, according to the number of edges of $F_I$.
For asymptotic counting, it is sufficient to only consider the one class of nc-spanning trees, with $k$ edges in forest $F_I$, that contains most spanning trees, over-counting by a factor of at most $2n-1$.
Given a non-crossing spanning tree, removing the $k$ edges of the forest $F_I$ in the interior, the graph breaks into a forest consisting of $k+1$ components; a forest on the lower chain of $k_1$ components, and a forest on the upper chain of $k_2$ components, with $k_1+k_2=k+1.$ 
It is sufficient to only count the number of spanning forests for a certain value of $k_1$, that maximizes the product of the three terms. This is, again, because we can ignore polynomial factors. For the same reason, we also can simplify some terms in the following, neglecting some constants. In particular we can assume that the forests on the two chains have $k$ components instead of $k+1,$ and set  $k=\alpha n$ and $k_1= \beta n$, with $0<\alpha<2$ and $0<\beta < \min\{\alpha,1\}.$    
We shall show that the maximum is attained for $k_1=k_2=\frac{k}{2}$.\\

The number of nc-forests with $c$ components on a set of $n$ points in convex position is given by the formula~\cite{ncconfigsFN} 
$$F_{n,c}=\frac{1}{2n-c}{{n}\choose{c-1}}{{3n-2c-1}\choose{2n-c-1}}.$$ 
In our setting, once fixed $k$, the factor for the forests in the interior is independent from the other two factors. We get for the product of the number of forests in the two chains
$$f_{n,k_1,k_2}=\frac{1}{2n-k_1}{{n}\choose{k_1-1}}{{3n-2k_1-1}\choose{2n-k_1-1}} \frac{1}{2n-k_2}{{n}\choose{k_2-1}}{{3n-2k_2-1}\choose{2n-k_2-1}}.$$
Then, 
 $$f_{n,k_1,k_2} \approx {{n}\choose{\beta n}}{{3n-2\beta n}\choose{2n-\beta n}}
  {{n}\choose{\alpha n - \beta n}}{{3n-2(\alpha n - \beta n)}\choose{2n-(\alpha n - \beta n)}}.$$

As in Section~\ref{sec:lbound} we use the binary entropy function to estimate a binomial coefficient, which gives 
$$f_{n,k_1,k_2}  \approx 2^{\left( H(\beta)+(3-2\beta)H(\frac{2-\beta}{3-2\beta})+H(\alpha-\beta)+(3-2\alpha+2\beta)H(\frac{2-\alpha+\beta}{3-2\alpha+2\beta})      \right)n}.$$
For $n$ and $\alpha$ fixed, this is a function of $\beta$ we want to maximize. Equivalently we can maximize the logarithm of basis $2$ of this function and we can ignore the factor $n$. Hence, we maximize  $$g_{\alpha}(\beta)=H(\beta)+(3-2\beta)H(\frac{2-\beta}{3-2\beta})+H(\alpha-\beta)+(3-2\alpha+2\beta)H(\frac{2-\alpha+\beta}{3-2\alpha+2\beta}).$$ 
Then standard calculations show that $g_{\alpha}(\beta)$ attains its maximum at $\beta=\frac{\alpha}{2}$.\\

Next, we determine the asymptotic growth of $|F_I(N,k)|.$ Ignoring polynomial factors, Equation~(\ref{eq:forests}) can be bounded from above by
$$\sum_{\ell=1}^{\min(k,n)}{{n \choose \ell}^2 {{2n-\ell}\choose k-\ell}}.$$ 
In fact, for asymptotic counting it is sufficient to only consider the largest term in this sum. We determine the value of $\ell$ that gives the largest summand. We set $k=\alpha n$ and $\ell= \lambda n$, with $0<\alpha < 2$ and $0<\lambda < 1.$
Then a summand has the form 
$$\approx {{n \choose \lambda n}^2 {{(2-\lambda)n}\choose (\alpha -\lambda)n}} \approx 2^{\left( 2H(\lambda)+(2-\lambda)H(\frac{\alpha-\lambda}{2-\lambda}) \right)n}.$$
For $\alpha$ fixed, we maximize 
$$h_{\alpha}(\lambda)=2H(\lambda)+(2-\lambda)H(\frac{\alpha-\lambda}{2-\lambda}).$$
Solving $h_{\alpha}'(\lambda)=0$, we obtain
$$\lambda=\frac{1+2\alpha\pm\sqrt{1+4\alpha}}{2\alpha}$$ and verify that the maximizing term is indeed 
$h_\alpha(\frac{1+2\alpha-\sqrt{1+4\alpha}}{2\alpha})$.\\


We finally can write the product of the three terms (number of forests in lower chain, number of forests in upper chain, number of forests in the interior) as a function of $n$ and $k=\alpha n$. 
We already have seen that the first two terms are equal.
The number of non-crossing trees of the double chain on $2n$ points is asymptotically  bounded from above  by  
$$
2^{t(\alpha)n}=2^{\left(2(H(\frac{\alpha}{2})+(3-\alpha)H(\frac{2-\alpha/2}{3-\alpha})  )    +h_\alpha(\frac{1+2\alpha-\sqrt{1+4\alpha}}{2\alpha})  \right)n}.$$
The function $t(\alpha)$ is maximized for $\alpha=0.750614$ with $2^{t(\alpha)}=8.93341.$ 
Therefore, the number of non-crossing spanning trees of the double chain on a set of $N=2n$ points
is at most $2^{8.93341N/2}=22.1112^N.$\\

\end{proof}

\section*{Acknowledgements}
The first author is supported by Projects  MTM2012-30951, DGR2009-SGR1040, and EuroGIGA, CRP ComPoSe: grant EUI-EURC-2011-4306. The second author is supported by Projects  MTM2011-24097 and DGR2009-SGR1040.

\end{document}